\documentclass[a4paper, 10pt]{article}

\usepackage{graphics,graphicx}
\usepackage{amssymb,amsmath}
\usepackage{enumerate}

\usepackage{color}

\newtheorem{theorem}{Theorem}[section]

\newtheorem{lemma}{Lemma}[section]

 \def \sm {\setminus}

\newenvironment{proof}[1][]%
{\noindent {\setcounter{equation}{0}\it Proof.
}{#1}{}}{\hfill$\Box$\vspace{2ex}}

\def\longbox#1{\parbox{0.85\textwidth}{#1}}

\begin{document}

\title{Structural domination and coloring of some ($P_7, C_7$)-free graphs}

\author{S. A. Choudum\thanks{Department of Mathematics, CHRIST (Deemed to be University), Bengaluru 560029, India. Email: sac@retiree.iitm.ac.in}\and
T. Karthick\thanks{Corresponding author. Computer Science Unit, Indian Statistical
Institute, Chennai Centre, Chennai 600029, India. Email: karthick@isichennai.res.in}
\and%
Manoj M. Belavadi\thanks{Department of Mathematics, CHRIST (Deemed to be University), Bengaluru 560029, India. Email: manoj.belavadi@maths.christuniversity.in}}

\date{\today}

\maketitle

\begin{abstract}
   We show that every connected induced subgraph of a graph $G$ is dominated by an induced connected split graph if and only if $G$ is $\cal{C}$-free, where $\cal{C}$ is a set of six graphs which includes $P_7$ and $C_7$, and each   containing an induced $P_5$. A similar characterisation is shown for the class of graphs which are dominated by induced complete split graphs. Motivated by these results, we study  structural descriptions of some classes of $\cal{C}$-free graphs. In particular, we give  structural descriptions for the class of ($P_7$,$C_7$,$C_4$,\,gem)-free graphs and for the class of ($P_7$,$C_7$,$C_4$,\,diamond)-free graphs. Using these results, we show that every ($P_7$,$C_7$,$C_4$,\,gem)-free graph $G$ satisfies $\chi(G) \leq 2\omega(G)-1$, and that every ($P_7$,$C_7$,$C_4$,\,diamond)-free graph $H$ satisfies $\chi(H) \leq \omega(H)+1$. These two upper bounds are tight for any subgraph of the Petersen graph containing a $C_5$.

   \medskip
\noindent{\bf Keywords}:  Structural domination;  $P_7$-free graphs; Split graphs;  Chromatic number; Clique size.
\end{abstract}

\section{Introduction}

Throughout the paper, we consider only simple and finite graphs. For a positive integer $\ell\geq 1$, let $P_\ell$ and $K_\ell$ respectively denote the chordless path and the complete graph on $\ell$ vertices. For a positive integer $\ell\geq 3$, $C_\ell$ is the chordless cycle on $\ell$ vertices. A \emph{diamond} is the four-vertex complete graph minus an edge, and a \emph{gem} is the graph consisting of a $P_4$ (say $P$) plus a vertex which is adjacent to all the vertices of $P$. Given a graph $H$, we say that a graph $G$ is \emph{$H$-free} if it does not contain an induced subgraph isomorphic to $H$.  Given a set $\{L_1,L_2,\ldots\}$ of graphs, we say that $G$ is \emph{$(L_1,L_2,\ldots)$-free} if $G$ is $L_i$-free, for each $i$.

 A \emph{clique} (\emph{stable set}) in a graph $G$ is a set of mutually adjacent (non-adjacent) vertices in $G$.
A \emph{split graph} is a graph whose vertex set can be partitioned into a stable set and a clique. A \emph{complete split graph} is a split graph where every vertex of the stable set is adjacent to every vertex of the clique.

 Domination and vertex colorings are two of the major topics extensively studied in graph theory. It is reflected in large number of books, monographs, and periodic surveys.
 In a graph $G$, a subset $D$ of $V(G)$ is a \emph{dominating set}  if every vertex in $V(G)\setminus D$ is adjacent to some vertex in $D$. The \emph{dominating induced subgraph} of a graph $G$ is the subgraph induced by a dominating set in $G$. For
any integer $k$, a \emph{$k$-coloring} of a graph $G$ is a mapping
$\phi:V(G)\rightarrow\{1,\ldots,k\}$ such that any two adjacent vertices
$u,v$ in $G$ satisfy $\phi(u)\neq \phi(v)$.  The \emph{chromatic number} $\chi(G)$ of
a graph $G$ is the smallest integer $k$ such that $G$ admits a $k$-coloring.  It is interesting to note that in many proof techniques employed to obtain optimal coloring of graphs, the structure of  dominating sets is exploited; see \cite{CKS2007} for examples.   We refer to a classical work of Bacs\'o \cite{Bacso2009} for a theoretical foundation of structural domination.

Wolk \cite{Wolk1962} showed that every connected induced subgraph of a graph $G$ is dominated by the graph $K_1$ if and only if $G$ is ($P_4$,\,$C_4$)-free.
 Bacs\'o and Tuza \cite{BacsoTuza1990}, and Cozzens and Kellehar \cite{CozzensKelleher1990} independently showed that every connected induced subgraph of a graph $G$ is dominated by a complete graph if and only if $G$ is ($P_5$,\,$C_5$)-free. Pim van't Hof and Paulusma \cite{HofPaulusma10} showed that every connected induced subgraph of a graph $G$ is dominated by a complete bipartite graph (not necessarily induced) if and only if $G$ is ($P_6$,\,$C_6$)-free. Bacs\'o, Michalak, and Tuza \cite{BacsoMichalakTuza2005} characterized the classes of graphs which are dominated by bipartite graphs, cycles, stars, and complete $k$-partite graphs. For other related results, we refer to \cite{BacsoMichalakTuza2005, BacsoTuzaVoigt2007,HofPaulusma10} and the references therein.
In this paper, by employing similar techniques as in \cite{BacsoMichalakTuza2005}, we show that every connected induced subgraph of a graph $G$ is dominated by an induced connected split graph if and only if $G$ is $\cal{C}$-free, where $\cal{C}$ is a set of six graphs which includes $P_7$ and $C_7$, and each containing an induced $P_5$. A similar characterisation is shown for the class of graphs which are dominated by  induced complete split graphs. Motivated by these results, we study the structural descriptions of some classes of $\cal{C}$-free graphs. In particular, we give  structural descriptions for the class of ($P_7$,$C_7$,$C_4$,\,gem)-free graphs and for the class of ($P_7$,$C_7$,$C_4$,\,diamond)-free graphs. Using these results, we show that every ($P_7$,$C_7$,$C_4$,\,gem)-free graph $G$ satisfies $\chi(G) \leq 2\omega(G)-1$, and that every ($P_7$,$C_7$,$C_4$,\,diamond)-free graph $H$ satisfies $\chi(H) \leq \omega(H)+1$. These two upper bounds are tight for any subgraph of the Petersen graph containing a $C_5$.

A family $\cal{G}$ of graphs is \emph{$\chi$-bounded} \cite{Gyarfas1987} with binding function $f$ if $\chi(H) \leq f(\omega(H))$ holds whenever $G\in \cal{G}$ and $H$ is an induced subgraph of $G$. Thus the class of  ($P_7$,$C_7$,$C_4$,\,gem)-free graphs, and the class of ($P_7$,$C_7$,$C_4$,\,diamond)-free graphs are $\chi$-bounded.
 In this respect, we note the following existing results which are relevant to this paper, and we refer to a recent extensive survey \cite{survey} for several families of graphs which admit a $\chi$-binding function.
  \begin{itemize}\setlength\itemsep{-0.5pt}
       \item Every ($P_6$,\,diamond)-free graph $G$ satisfies $\chi(G)\leq \omega(G)+3$ \cite{CHM2018}.
    \item Every ($P_6$,\,gem)-free graph $G$ satisfies $\chi(G)\leq 8\omega(G)$ \cite{CKS2007}.
     \item Every ($P_6$,\,$C_4$)-free graph $G$ satisfies $\chi(G)\leq \lceil\frac{5\omega(G)}{4}\rceil$ \cite{KM-SIDMA}.
  \item Every ($P_7,C_4,C_5$)-free graph $G$ satisfies $\chi(G)\leq \frac{3\omega(G)}{2}$ \cite{2018}.
  \item Since the class of $P_7$-free graphs admits a $\chi$-binding function ($f(x)=6^{x-1}$) \cite{Gyarfas1987}, the class of $\cal{C}$-free graphs is $\chi$-bounded. Also we note that the problem of obtaining a polynomial $\chi$-binding function for the class of $P_7$-free graphs is open, and is open even for the class of ($P_5,C_5$)-free graphs; the best known $\chi$-binding function for such class of graphs is $f(x)=2^{x-1}$ \cite{Chud-Siva}.

  \item Since the class of $P_5$-free graphs does not admit a linear $\chi$-binding function \cite{Fouquet1995}, it follows that the class of $\cal{C}$-free graphs too does not admit a linear $\chi$-binding function.
    \end{itemize}
Furthermore, the class of $P_7$-free graphs are of particular interest in algorithmic graph theory as well since the computational complexity of   {\textsc{Minimum  Dominating Set Problem}}, {\textsc{Maximum Independent Set Problem}}, and $k$-{\textsc{Colorability Problem}} (for $k\geq 3$) are unknown for the class of $P_7$-free graphs.

    \section{Notation and terminology}
 We follow West \cite{West} for standard terminology and notation.

     A \emph{hole} in a graph is an induced subgraph which is a cycle of length at least four. A hole is called \emph{even} if it has an even number of vertices. An \emph{even-hole-free graph} is a graph with no even holes.

     If $G_1$ and $G_2$ are two vertex disjoint graphs, then $G_1\cup G_2$ is the graph with vertex set $V(G_1)\cup V(G_2)$ and the edge set $E(G_1)\cup E(G_2)$. For any two disjoint subsets
$X$ and $Y$ of $V(G)$, we denote by $[X,Y]$, the set of edges with one end in $X$ and other end in $Y$.  We say that $X$ is
\emph{complete} to $Y$ or $[X,Y]$ is complete if every vertex in $X$
is adjacent to every vertex in $Y$; and $X$ is \emph{anticomplete} to
$Y$ if $[X,Y]=\emptyset$.  If $X$ is singleton, say $\{v\}$, we simply
write $v$ is complete (anticomplete) to $Y$. A \emph{clique-cutset} of a graph $G$ is a clique $K$ in $G$ such that $G \setminus K$ has more connected components than $G$.

In a graph $G$, the \emph{neighborhood} of a vertex $x$ is the set
$N_G(x)=\{y\in V(G)\setminus \{x\}\mid xy\in E(G)\}$; we drop the
subscript $G$ when there is no ambiguity.   The neighborhood $N(X)$ of a
subset $X \subseteq V(G)$ is the set $\{u \in V(G)\setminus X  \mid  u$ $\mbox{~is adjacent to a vertex of }X\}$.
A vertex of a graph is \emph{bisimplicial} if its neighborhood is the union of two cliques (not necessarily disjoint).
Given a set $S\subseteq V(G)$, $G[S]$
denote the subgraph of $G$ induced by $S$ in $G$, and for any $x\in V(G)\setminus S$, we denote the set $N(x)\cap S$ by $N_{S}(x)$. If $S \subseteq V(G)$, then a vertex $y\in V(G)\setminus S$ is a \emph{private neighbor} of some $x\in S$ if $y$ is adjacent to $x$ and is non-adjacent to every vertex in $S\setminus\{x\}$.  Given a connected graph $G$, a vertex $u$ of $G$ is a \emph{cut-vertex} of $G$ if $G\setminus u$ is disconnected.
Given a graph $G$, \emph{attaching a leaf to a vertex} $u$ of $G$  means  adding  a vertex $u'$ and an edge $uu'$ to $G$. 

A \emph{blowup} of a graph $H$ with vertices $v_1,v_2,\ldots,v_n$ is any graph $G$ such that $V(G)$ can
be partitioned into $n$ (not necessarily non-empty) cliques
$A_i$, $v_i\in V(H)$, such that $A_i$ is complete to $A_j$ if $v_iv_j\in E(H)$,
and $A_i$ is anticomplete to $A_j$ if $v_iv_j\notin E(H)$.
A blowup of a graph $H$ is a \emph{max-blowup} if $A_i$ is maximal for each $v_i\in V(H)$.
In a blowup $G$ of a graph $H$, if $A_i$ is not-empty for some $i$, then for convenience, we call one vertex of $A_i$ as $v_i$.


\section{Graphs dominated by split graphs}

 A $paw$ is a graph on four vertices $a,b,c,$ and $d$, and four edges $ab, bc, ca$ and $ad$. Let $H_1$ be the graph obtained from the paw by adding the vertex $e$ and an edge $de$, and $H_2$ be the graph obtained from the $H_1$ by adding an edge $ae$.

Let ${\cal{L}}_1:= \{P_5,C_5,C_4,H_1,H_2\}$ and let ${\cal{L}}_2:=\{P_4,C_4, \mbox{paw}\}$.

For the proof of our main result of this section, we rewrite a characterisation of split graphs where every forbidden graph is connected.
 A well known result of F\"oldes and Hammer \cite{split} states that a graph $G$ is a split graph if and only if it is ($2K_2, C_4, C_5$)-free. A recent result proved in \cite{DSM2016} states that a connected graph $G$ is $2K_2$-free if and only if it is $(P_5,H_1,H_2)$-free. Thus we have the following lemma.

\begin{lemma}\label{lem1}
A connected graph $G$ is a split graph if and only if it is ${\cal{L}}_1$-free.
\end{lemma}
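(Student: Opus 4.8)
The plan is to obtain the statement as a direct consequence of the two characterisations quoted immediately above. For the ``only if'' direction, I would suppose $G$ is a connected split graph. By the F\"oldes--Hammer theorem \cite{split}, $G$ is $(2K_2,C_4,C_5)$-free; in particular, $G$ is $C_4$-free and $C_5$-free. Since $G$ is also connected and $2K_2$-free, the result of \cite{DSM2016} shows that $G$ is $(P_5,H_1,H_2)$-free. Putting these together, $G$ is $\{P_5,C_5,C_4,H_1,H_2\}$-free, that is, ${\cal{L}}_1$-free.

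For the ``if'' direction, I would suppose $G$ is connected and ${\cal{L}}_1$-free. Then $G$ is in particular $(P_5,H_1,H_2)$-free, so, being connected, $G$ is $2K_2$-free by \cite{DSM2016}. As $G$ is also $C_4$-free and $C_5$-free, it is $(2K_2,C_4,C_5)$-free, whence $G$ is a split graph by the F\"oldes--Hammer theorem \cite{split}.

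There is no real obstacle here, as the argument is a two-line deduction from the cited results. The one point worth recording is that the connectedness hypothesis is essential: $2K_2$ is acyclic and has only four vertices, so it contains no induced $P_5$, $C_4$, $C_5$, $H_1$, or $H_2$ (each of these being either a cycle or a graph on five vertices), hence $2K_2$ is ${\cal{L}}_1$-free; yet $2K_2$ is not a split graph, being one of the minimal obstructions in the F\"oldes--Hammer list. It is also easy, though not logically needed here, to note that each of $P_5$, $H_1$, and $H_2$ contains an induced $2K_2$, which is precisely the feature that makes the implication ``$2K_2$-free $\Rightarrow$ $(P_5,H_1,H_2)$-free'' valid within \cite{DSM2016}.
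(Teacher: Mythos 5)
Your proposal is correct and follows essentially the same route as the paper, which derives the lemma directly by combining the F\"oldes--Hammer characterisation of split graphs with the result of \cite{DSM2016} that a connected graph is $2K_2$-free if and only if it is $(P_5,H_1,H_2)$-free. Your extra remark on why connectedness is needed (that $2K_2$ is ${\cal{L}}_1$-free but not split) is accurate, though not required for the statement as given.
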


\begin{lemma}\label{lem2}
A connected graph $G$ is a complete split graph if and only if it is ${\cal{L}}_2$-free.
\end{lemma}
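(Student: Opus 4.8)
The plan is to prove both directions by producing, for each ``bad'' induced subgraph, an obstruction from $\mathcal{L}_2 = \{P_4, C_4, \mathrm{paw}\}$, and conversely by showing that an $\mathcal{L}_2$-free connected graph splits as a complete split graph. For the forward direction, suppose $G$ is a connected complete split graph with vertex partition $V(G) = S \cup K$, where $S$ is a stable set, $K$ is a clique, and $[S,K]$ is complete. I would argue directly that $G$ contains no induced $P_4$, no induced $C_4$, and no induced paw: any four vertices must distribute among $S$ and $K$, and in each case ($0,1,2,3,4$ vertices in $S$) one checks that the induced subgraph on those four vertices has too many or too few edges to be $P_4$, $C_4$, or a paw. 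For instance, if two of the four chosen vertices lie in $S$ they are non-adjacent but each is adjacent to every chosen vertex of $K$; since $P_4$, $C_4$, and the paw each have at most one vertex whose non-neighborhood among the other three is a single vertex adjacent to both its neighbors... — more cleanly, note $P_4$, $C_4$, paw all have independence number $2$ and each is connected on $4$ vertices, and a short case analysis on $|S'|$ where $S' = S \cap \{$the four vertices$\}$ rules out each.

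For the converse, let $G$ be a connected $\mathcal{L}_2$-free graph. Since $\mathcal{L}_1 \supseteq \{P_4, C_4\}$ is not implied by $\mathcal{L}_2$, I first invoke structure: a connected $(P_4, C_4)$-free graph is a ``trivially perfect'' graph, but more to the point I want to show $G$ is already a split graph via Lemma~\ref{lem1}. Observe that $P_4$ and $C_4$ are in $\mathcal{L}_2$, so $G$ is $(P_4,C_4)$-free; it remains to rule out $C_5$, $H_1$, $H_2$, and then Lemma~\ref{lem1} gives that $G$ is a (connected) split graph, say with partition $S \cup K$. Each of $C_5$, $H_1$, $H_2$ contains an induced $P_4$ — indeed $C_5 \supseteq P_4$, and both $H_1$ and $H_2$ contain the $P_4$ on $c,a,d,e$ (using edges $ca$, $ad$, $de$, and non-adjacency of $c$ with $d$ and of $c,a$ with $e$ in the $H_1$ case; for $H_2$ one must locate a different induced $P_4$, e.g. $b,c,a,e$ is a $P_4$ if $bc, ca, ae \in E$ and $b \not\sim a$... wait, $b \sim a$; instead $c,b,a,e$ fails too, so use $d,a,b,c$: edges $da, ab, bc$, with $d \not\sim b$, $d \not\sim c$, $a \not\sim c$ — yes this is an induced $P_4$). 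Hence $G$ is $\mathcal{L}_1$-free, so by Lemma~\ref{lem1} it is a split graph with partition $S \cup K$; choose this partition with $|K|$ maximum. I then claim $[S,K]$ is complete, i.e., $G$ is a complete split graph: if some $s \in S$ is non-adjacent to some $k \in K$, I derive a forbidden paw or $P_4$ using additional vertices whose existence is forced by connectedness and the maximality of $K$.

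The main obstacle I anticipate is exactly this last step — forcing the completeness of $[S,K]$ — because the argument must be packaged to work uniformly and must correctly use the maximality of $|K|$. Concretely: if $s \in S$ has a non-neighbor $k \in K$, then by maximality of $|K|$, adding $k$... rather, since $S \cup \{k\}$ cannot be made a clique-side improvement, $k$ must have a non-neighbor in $K$ — no; $k \in K$ is adjacent to all of $K \setminus \{k\}$. The right move: since $G$ is connected, $s$ has a neighbor, and a path from $s$ to $k$ together with the clique structure of $K$ yields a short walk; a shortest $s$–$k$ path of length $\ge 2$ gives vertices $s, x, \dots, k$, and since $K$ is a clique and $s$ is non-adjacent to $k$ but adjacent to $x$, analyzing whether $x \in K$ or $x \in S$ and picking a third vertex $k' \in K$ adjacent to $k$ produces either an induced $P_4$ (on $s, x, k, k'$ suitably ordered) or a paw, contradicting $\mathcal{L}_2$-freeness. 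Handling the case $x \in S$ and the various adjacency patterns between $x$, $k$, $k'$, and $s$ is the bookkeeping-heavy part, but each sub-case collapses quickly to one of $P_4$, $C_4$, paw. Once completeness of $[S,K]$ is established, $G$ is by definition a complete split graph, completing the proof.
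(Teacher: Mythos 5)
Your overall strategy (reduce to Lemma~\ref{lem1} to get a split partition, then upgrade it to a complete split partition) is different from the paper's, which instead shows that a connected $(P_4,\mathrm{paw})$-free graph is $(K_2\cup K_1)$-free and then invokes the known characterization that $(K_2\cup K_1, C_4)$-free graphs are exactly the complete split graphs. Unfortunately your version has a genuine gap at exactly the step you flagged as the main obstacle. The claim ``take a split partition $S\cup K$ with $|K|$ maximum; then any non-edge between $S$ and $K$ yields an induced $P_4$, $C_4$ or paw'' is false. The diamond is connected and $\{P_4,C_4,\mathrm{paw}\}$-free, and its unique maximum clique $K$ is a triangle, with $S$ the remaining vertex; that vertex is non-adjacent to one vertex of $K$, yet the graph contains none of the forbidden subgraphs. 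Concretely, in your case analysis the sub-case where $s$ is adjacent to all of $K$ except $k$ produces only diamonds, which are not forbidden, so the sub-cases do not ``collapse quickly''; the correct conclusion from paw-freeness is only that every $s\in S$ with a neighbor in $K$ misses at most one vertex of $K$, and to finish you must either re-choose the partition (maximizing $|K|$ is the wrong extremal choice, as the diamond shows) or argue as the paper does: given $x,y,z$ with $xy\in E$ and $z$ non-adjacent to both, a shortest $x$--$z$ path has length $2$ by $P_4$-freeness, and the midpoint $w$ together with $x,y,z$ induces a $P_4$ or a paw; hence $G$ is $(K_2\cup K_1,C_4)$-free and therefore a complete split graph.

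A second, smaller error: your claimed induced $P_4$ in $H_2$ on $d,a,b,c$ is not induced, since $a$ and $c$ are adjacent (the edge $ca$ belongs to the paw). In fact $H_2$ is the bowtie and contains no induced $P_4$ at all. This is harmless for your reduction because $H_2$ (like $H_1$) contains an induced paw, e.g.\ on $\{a,b,c,d\}$, so paw-freeness already rules it out (and $P_5$-freeness, which you also need for ${\cal L}_1$, follows from $P_4$-freeness); but as written the justification is wrong. The forward direction of your proposal is fine.
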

\begin{proof} Using a result in \cite{cmplt}, it is easy to deduce that a graph $G$ is a complete split graph if and only if it is $(K_2\cup K_1, C_4)$-free. So it is sufficient to show that a connected graph $G$ is $K_2\cup K_1$-free if and only if $G$ is ($P_4$, paw)-free.
If $G$ is $K_2\cup K_1$-free, then obviously $G$ is ($P_4$, paw)-free as $P_4$ and paw both contain a $K_2\cup K_1$. Conversely, let $G$ be a ($P_4$, paw)-free graph. Suppose to the contrary that there exists a $K_2\cup K_1$ in $G$ with vertices $x,y,$ and $z$ such that $xy \in E(G)$. Then since $G$ is connected there exists a shortest path $P$ of length at least 2 between $x$ and $z$. Now since $G$ is $P_4$-free, $P$ is of length $2$, and thus there exists a vertex  $w(\neq y)$ such that $w$ is adjacent to both $x$ and $z$. But then $\{y,x,w,z\}$  induces either a $P_4$ or a paw, a contradiction. So $G$ is $K_2\cup K_1$-free and the lemma follows.
\end{proof}

\begin{figure}[h]
\centering
 \includegraphics[width=11cm]{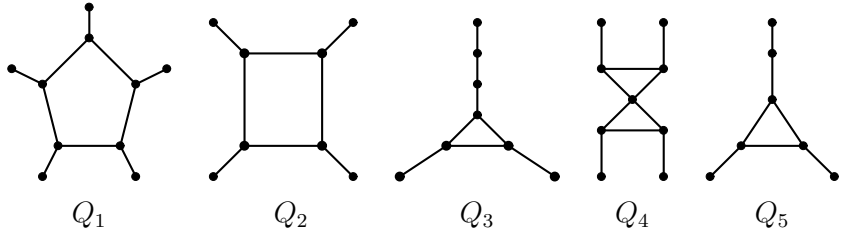}
\caption{Some special graphs}\label{fig:Q}
\end{figure}

Let $Q_1,Q_2,Q_3,Q_4$ and $Q_5$ be five graphs as shown in
Figure~\ref{fig:Q}.

Let ${\cal{C}}:= \{P_7,C_7,Q_1,Q_2,Q_3,Q_4\}$ and ${\cal{D}}:= \{P_6,C_6,Q_2,Q_5\}$.

 \begin{theorem}\label{splitgh}
Every connected induced subgraph of a graph $G$ is dominated by an induced connected split graph if and only if $G$ is $\cal{C}$-free.
 \end{theorem}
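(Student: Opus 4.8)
The forward direction is the easy half. Suppose every connected induced subgraph of $G$ is dominated by an induced connected split graph. I would observe that $P_7$, $C_7$, and each $Q_i$ ($i=1,2,3,4$) is itself connected and, crucially, contains an induced $P_5$ while not being a split graph (one checks each of the six graphs contains a $2K_2$ or $C_4$ or $C_5$, so by Lemma~\ref{lem1} none is a split graph). Moreover I would check, case by case on the six graphs $F\in\cal C$, that no proper induced subgraph of $F$ that dominates $F$ can be a split graph: since a dominating set of $F$ must meet $N[x]$ for every $x$, and $F$ has small diameter, the dominating induced subgraphs of $F$ are few, and each of them still contains an induced $P_5$ (or $C_4$/$C_5$), hence is not a split graph by Lemma~\ref{lem1}. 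So if $G$ contained an induced copy of some $F\in\cal C$, then $F$ itself is a connected induced subgraph of $G$ admitting no dominating induced connected split graph, a contradiction. This forces $G$ to be $\cal C$-free.

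For the converse, the plan is to adapt the argument of Bacs\'o--Michalak--Tuza \cite{BacsoMichalakTuza2005}. It suffices to prove: if $G$ is connected and $\cal C$-free, then $G$ has a dominating induced connected split graph (applying this to every connected induced subgraph, which is again $\cal C$-free). I would proceed by taking a connected dominating set $D$ that induces a subgraph that is, among all connected dominating sets, ``closest to split'' in some measure, and derive a contradiction if $G[D]$ is not split. Concretely: start from any minimal connected dominating set $D$; by Lemma~\ref{lem1}, if $G[D]$ is not a split graph it contains an induced $P_5$, $C_5$, $C_4$, $H_1$, or $H_2$. Using minimality of $D$, each vertex of $D$ has a private neighbor in $V(G)\setminus D$ (or is a cut-vertex of $G[D]$-type obstruction), and one shows that an induced $\{P_5,C_5,C_4,H_1,H_2\}$ inside $G[D]$ together with suitable private neighbors and connecting paths extends — using $\cal C$-freeness to control how an outside vertex can attach — to an induced copy of $P_7$, $C_7$, or one of $Q_1,\dots,Q_4$ in $G$. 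This is exactly the design principle behind the choice of the six graphs in $\cal C$: each $Q_i$ is an ``$H_1$- or $H_2$- or $C_4$-obstruction with pendant private neighbors attached,'' and $P_7$, $C_7$ are the $P_5$- and $C_5$-obstructions with leaves attached at both ends.

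The main obstacle is the converse, and within it the delicate bookkeeping of the five-case analysis by obstruction type. For the $P_5$ case $v_1v_2v_3v_4v_5$ in $G[D]$, minimality of $D$ typically gives a private neighbor of $v_1$ and of $v_5$, yielding an induced $P_7$ unless these private neighbors see other $v_i$'s in a restricted pattern; ruling out those patterns (or seeing they produce a $Q_i$) is the heart of the matter. The $C_5$ case similarly targets $C_7$. The $C_4$, $H_1$, $H_2$ cases must each be pushed to one of $Q_1,\dots,Q_4$; here one needs that $G$ is connected and $C_7$-free and $P_7$-free to forbid the alternative of a long induced path escaping from the obstruction. I would organize the proof so that whenever a private neighbor $y$ of a vertex $v$ in the obstruction is adjacent to some other obstruction vertex, either the resulting graph is one of the six forbidden graphs (done), or a short case check shows $G[D]$ was not in fact minimal / the obstruction can be shifted, allowing an inductive step on $|D|$ or on the number of vertices of $G[D]$ outside a fixed split partition. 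The remaining verification — that each of the finitely many configurations produced is genuinely an induced $P_7$, $C_7$, or $Q_i$ — is routine adjacency checking and I would relegate it to figure references and brief case labels rather than writing it out in full.
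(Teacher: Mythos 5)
Your forward direction matches the paper's (it is the easy half, and your observation that each graph in ${\cal C}$ admits no dominating induced connected split subgraph is all that is used). The gap is in the converse. Your concrete plan --- fix one minimal connected dominating set $D$, locate an induced $P_5$, $C_5$, $C_4$, $H_1$ or $H_2$ in $G[D]$, and attach private neighbours to its vertices --- does not go through as described, for two reasons. First, minimality of $D$ only yields a private neighbour for a vertex whose deletion leaves $G[D]$ connected and dominating; a vertex of the obstruction that is a cut-vertex of $G[D]$ need have no private neighbour, and your parenthetical acknowledgement of this case is not a mechanism for handling it. Second, and more seriously, private neighbours of \emph{distinct} obstruction vertices are only guaranteed to miss $D$ minus their own support vertex; they may be adjacent to one another, so ``obstruction plus pendants'' need not be an induced $P_7$, $C_7$ or $Q_i$. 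Controlling these stray adjacencies among up to four or five simultaneously attached vertices is not the routine verification you defer to figure references; it is exactly the difficulty, and nothing in your outline (the vague ``closest to split'' measure or the unspecified induction) resolves it.

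The paper's proof avoids attaching all pendants at once. It works with the family $J$ of connected dominating induced subgraphs and eliminates the obstructions one type at a time, in the order $C_4$, $C_5$, $H_1$, $H_2$, each time by a lexicographic extremal choice: take $H$ in the current family with the minimum number of copies of the obstruction and, subject to that, with as many leaves of $H$ as possible attached to one fixed copy. A single private neighbour is then added as a new degree-one vertex of the dominating subgraph (such an addition cannot create a $C_4$ or $C_5$, which is why the order of elimination matters), and the contradiction is with the extremal choice. Because the pendants certifying the final configuration are genuine leaves of $H$, they are automatically pairwise non-adjacent and non-adjacent to the rest of the copy, which is what makes $Q_2$, $Q_3$, $Q_4$ (and the $C_5$-analogue) appear as induced subgraphs of $G$. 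Finally, the $P_5$ obstruction is not treated by hand at all: the paper invokes the known fact \cite{1993} that in a $(P_7,C_7)$-free graph every minimal dominating subgraph is $P_5$-free, and then passes to a minimal connected dominating subgraph inside a member of the last family; this inherits $C_4$-, $C_5$-, $H_1$-, $H_2$-freeness, hence is ${\cal L}_1$-free and split by Lemma~\ref{lem1}. So the missing idea in your proposal is precisely this extremal bookkeeping (the device of \cite{BacsoMichalakTuza2005}), not the case analysis you plan to relegate to routine checking; without it, the single-minimal-set approach stalls at the cut-vertex and pendant-adjacency issues above.
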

 \begin{proof}
If every connected induced subgraph of a graph $G$ is dominated  by an induced connected split graph, then $G$ is $\cal{C}$-free as none of the graphs in $\cal{C}$ is dominated by a connected split graph. To prove the converse, we may assume that $G$ is connected and it is sufficient to prove that $G$ is dominated by an induced connected split graph. In view of Lemma~\ref{lem1}, we prove that there exists a connected dominating  induced subgraph $H$ of $G$, where $H$ is ${\cal{L}}_1$-free.
\par
 Let $J$ be the set of all subgraphs $H$ of $G$ such that $H$ is a connected dominating induced subgraph of $G$. Then we claim the following.
  \begin{equation}\label{H-C4-free}
\mbox{There exists a graph in $J$ which is $C_4$-free}.
\end{equation}
Proof of $(\ref{H-C4-free})$: Suppose to the contrary that every graph in $J$ contains a $C_4$. Choose $H \in J$ such that $H$ contains minimum number of 4-cycles with as many leaves as possible attached to a 4-cycle, say $C$. We prove that each vertex $x\in V(C)$ has a leaf attached to it.
 If $x$ is a cut-vertex of $H$, then there exists a leaf attached to it. If $x$ is a non cut-vertex of $H$, then there exists a private neighbor $x_0$ of $x$ in $G$ (otherwise, $H\setminus x$ is a connected dominating subgraph with at least one less $C_4$ than $H$ which is a contradiction).  But then  $G[V(H)\cup \{x_0\}]$ is a connected dominating subgraph of $G$ with the same number of 4-cycles as $H$, but with more leaves attached to $C$ which is a contradiction. Hence every vertex in $C$ has a leaf attached to it.  This implies that $G$ contains $Q_2$, a contradiction to the fact that $G$ is $Q_2$-free. So $(\ref{H-C4-free})$ holds. $\lozenge$

\medskip
  Let $J_1\subseteq J$ be the set $\{H\in J \mid H$ is $C_4$-free$\}$. Then by $(\ref{H-C4-free})$,  $J_1$ is non-empty.  Then a similar proof as in $(\ref{H-C4-free})$  shows that the following holds.
 \begin{equation}\label{H-C5-free}
\mbox{There exists a graph in $J_1$ which is $C_5$-free}.
\end{equation}

 Let $J_2\subseteq J_1$ be the set $\{H\in J_1 \mid H$ is $C_5$-free$\}$. Then by  $(\ref{H-C5-free})$,  $J_2$ is non-empty.
 Then we claim the following.
  \begin{equation}\label{H-H1-free}
\mbox{There exists a graph in $J_2$ which is $H_1$-free}.
\end{equation}
Proof of $(\ref{H-H1-free})$: Suppose to the contrary that every graph in $J_2$ contains a $H_1$.
Let us choose $H\in J_2$ containing minimum number of copies of $H_1$ with as many leaves as possible attached to non cut vertices of a $H_1$; and we call this copy of $H_1$ as $M$. We prove that   each non cut-vertex $x\in V(M)$ has a leaf attached to it.  If $x$ is a cut-vertex of $H$, then there exists a leaf attached to it. If $x$ is a non cut-vertex of $H$, then there exists a private neighbour $x_0$ of $x$ in $G$ (otherwise, $H\setminus x\in J_2$  with less number of copies of $H_1$ than $H$ which is a contradiction). Moreover, $G[V(H)\cup \{x_0\}]\in J_2$ as adding $x_0$ to $H$ does not induce a $C_4$ or a $C_5$. But then $G[V(H)\cup \{x_0\}]$ contains the same number of copies of $H_1$ as $H$ with more leaves attached to $M$ which is a contradiction.   Hence every non cut-vertex in $M$ has a leaf attached to it.  This implies that $G$ contains $Q_3$, a contradiction to the fact that $G$ is $Q_3$-free. So $(\ref{H-H1-free})$ holds. $\lozenge$

\medskip
 Let $J_3\subseteq J_2$ be the set $\{H\in J_2 \mid H$ is $H_1$-free$\}$. Then by  $(\ref{H-H1-free})$,  $J_3$ is non-empty.
 Then we claim the following.
  \begin{equation}\label{H-H2-free}
\mbox{There exists a graph in $J_3$ which is $H_2$-free}.
\end{equation}
Proof of $(\ref{H-H2-free})$: Suppose to the contrary that every graph in $J_3$ contains a $H_2$.
Let us choose $H\in J_3$ containing minimum number of copies of $H_2$ with as many leaves as possible attached to non cut vertices of a $H_2$; and we call this copy of $H_2$ as $M$. Then  we shall prove that  each non cut-vertex $x\in V(M)$ has a leaf attached to it.  If $x$ is a cut-vertex of $H$, then there exists a leaf attached to it. If $x$ is a non cut-vertex of $H$, then there exists a private neighbour $x_0$ of $x$ in $G$; for otherwise, $H\setminus x\in J_3$ contains less number of copies of $H_2$ than $H$, a contradiction. Also we see that $G[V(H)\cup \{x_0\}]\in J_3$ as adding $x_0$ to $H$ does not induce a $C_4$ or a $C_5$ or a $H_1$. But then $G[V(H)\cup \{x_0\}]$ will have same number of copies of $H_2$ as $H$, but with more leaves attached to $M$, a contradiction.  Hence every non cut-vertex in $M$ has a leaf attached to it.  This implies that $G$ contains $Q_4$, a contradiction to the fact that $G$ is $Q_4$-free. So $(\ref{H-H2-free})$ holds. $\lozenge$

\smallskip
  Since $G$ is ($P_7,C_7$)-free, we see that every minimal dominating subgraph of $G$ is $P_5$-free (see also \cite{1993}). So there exists a minimal dominating subgraph $H\in J_3$ which is ${\cal{L}}_1$-free. This completes the proof.
 \end{proof}

\begin{theorem}
Every connected induced subgraph of a graph $G$ is dominated by an induced complete split graph if and only if $G$ is $\cal{D}$-free.
\end{theorem}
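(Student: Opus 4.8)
The natural plan is to follow the proof of Theorem~\ref{splitgh} almost verbatim, with Lemma~\ref{lem2} (the characterisation of connected complete split graphs by ${\cal L}_2=\{P_4,C_4,\mbox{paw}\}$) in place of Lemma~\ref{lem1}. Thus $P_6,C_6$ replace $P_7,C_7$, the graph $Q_2$ keeps its role as the obstruction to making a connected dominating induced subgraph $C_4$-free, and $Q_5$ --- the graph obtained from a paw by attaching a leaf to each of its three non-cut-vertices (Figure~\ref{fig:Q}) --- plays the role that $Q_3,Q_4$ play for $H_1,H_2$, namely the obstruction to making such a subgraph paw-free.

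\emph{Forward implication.} A connected complete split graph is ${\cal L}_2$-free by Lemma~\ref{lem2}, so any graph dominated by an induced complete split graph has a connected dominating induced subgraph that is $(P_4,C_4,\mbox{paw})$-free; hence it suffices to check that none of $P_6,C_6,Q_2,Q_5$ admits such a subgraph. In $P_6$ and $C_6$, every connected dominating induced subgraph is a sub-path long enough to reach the vertices farthest from it, hence has at least four vertices and contains an induced $P_4$. In $Q_2$, no single vertex dominates $Q_2$ and a pendant vertex belongs to a connected subgraph only together with its unique neighbour, so every connected dominating induced subgraph contains all four vertices of the central $C_4$, hence an induced $C_4$. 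In $Q_5$, the same reasoning forces the three non-cut-vertices of the paw into every connected dominating induced subgraph, and connectedness then forces the remaining (cut-)vertex of the paw, so such a subgraph contains an induced paw. Therefore $G$ is ${\cal D}$-free.

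\emph{Converse.} Assume $G$ is connected and ${\cal D}$-free, and let $J$ be the set of connected dominating induced subgraphs of $G$; by Lemma~\ref{lem2} it suffices to produce a member of $J$ that is ${\cal L}_2$-free. By the argument proving $(\ref{H-C4-free})$, using that $G$ is $Q_2$-free, some member of $J$ is $C_4$-free; running the argument proving $(\ref{H-H1-free})$ with the paw in place of $H_1$ and $Q_5$ in place of $Q_3$, using that $G$ is $Q_5$-free, some member of $J$ is both $C_4$-free and paw-free, say $H_0$. Now pick $H\subseteq H_0$ minimal under vertex deletion subject to $H\in J$; then $H$ is still $C_4$-free and paw-free, and $H$ is $P_4$-free by the argument at the end of the proof of Theorem~\ref{splitgh} (with $(P_6,C_6)$ in place of $(P_7,C_7)$): if $r_1\cdots r_m$ is a longest induced path of $H$ then $m\geq 4$, the endpoints $r_1,r_m$ are non-cut-vertices of $H$ (a neighbour of $r_1$ in a component of $H\setminus r_1$ other than the one containing $r_2,\ldots,r_m$ would give a longer induced path), so by minimality of $H$ they have private neighbours $s_1\neq s_m$ in $G\setminus V(H)$, and $\{s_1,r_1,\ldots,r_m,s_m\}$ then induces a $P_{m+2}$ or a $C_{m+2}$, which contains an induced $P_6$ or $C_6$ --- a contradiction. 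Hence $H$ is ${\cal L}_2$-free, and by Lemma~\ref{lem2} it is a connected complete split graph dominating $G$.

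\emph{Main obstacle.} The delicate step is the paw case of the converse. In $(\ref{H-C4-free})$ one uses that attaching a pendant private neighbour to a vertex of a $4$-cycle creates no new $4$-cycle; but a pendant attached to a vertex lying in a triangle of $H$ does create a new paw, so the extremal-subgraph bookkeeping --- minimise the number of induced paws, then maximise the number of leaves attached to the non-cut-vertices of a fixed copy --- has to be carried out with a little more care (tracking a secondary parameter, or checking that the forced copy of $Q_5$ is already present before the modification), exactly as $(\ref{H-H1-free})$ and $(\ref{H-H2-free})$ do for $H_1$ and $H_2$ in Theorem~\ref{splitgh}. Everything else is a routine transcription of the proof of Theorem~\ref{splitgh}.
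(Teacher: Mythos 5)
Your proposal is correct and follows essentially the same route as the paper: the forward direction by checking directly that none of $P_6$, $C_6$, $Q_2$, $Q_5$ has a connected dominating induced subgraph that is ${\cal L}_2$-free, and the converse by the same extremal leaf-attaching argument (the analogue of (\ref{H-C4-free}) with $Q_2$, then the paw-analogue of (\ref{H-H1-free}) with $Q_5$), finishing with a minimal connected dominating subgraph and $(P_6,C_6)$-freeness to get $P_4$-freeness, where you prove inline the fact the paper cites from Bacs\'o--Tuza. The ``main obstacle'' you flag (attaching a pendant private neighbour to a vertex lying in a triangle creates a new induced paw, so the paw-count need not stay the same) is real, but the paper's own proof --- like its claims (\ref{H-H1-free}) and (\ref{H-H2-free}) --- simply asserts the count is unchanged rather than tracking any extra parameter, so your treatment is no less complete than the paper's.
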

\begin{proof}
If every connected induced subgraph of $G$ is dominated by  an induced complete split graph, then $G$ is ${\cal{D}}$-free as none of the graphs in ${\cal{D}}$ is dominated by a complete split graph. To prove the converse,  we may assume that $G$ is connected and it is sufficient to prove that $G$ is dominated by an induced complete split graph. In view of Lemma~\ref{lem2}, we prove that there exists a connected dominating induced subgraph $H$ of $G$, where $H$ is ${\cal{L}}_2$-free.

   Let $J$ be the set of all subgraphs $H$ of $G$ such that $H$ is a connected dominating induced subgraph of $G$. Let $J_1\subseteq J$ be the set $\{H\in J \mid H$ is $C_4$-free$\}$. A proof similar to that of Theorem~\ref{splitgh} proves that $J_1$ is not empty.
 Then we claim  there exists a graph in $J_1$ which is paw-free. Suppose not. Choose $H\in J_1$ containing a minimum number of copies of paw with as many leaves as possible attached to non cut vertices of a paw; and we call this copy of paw as $W$. We prove that each non cut-vertex $x$ of $W$ has a leaf attached to it.  If $x$ is a cut-vertex of $H$, then there exists a leaf attached to it. If $x$ is a non cut-vertex of $H$, then there exists a private neighbour $x_0$ of $x$ in $G$; for otherwise, $H\setminus x$ belongs to $J_1$, containing less number of copies of paw than $H$, a contradiction. Moreover,  $G[V(H)\cup \{x_0\}]\in J_1$ as adding $x_0$ to $H$ does not induce a $C_4$. But then $G[V(H)\cup \{x_0\}]$ contains same number of copies of paw as $H$ with more leaves attached to $W$, a contradiction.  Hence every non cut-vertex in $W$ has a leaf attached to it.  This implies that $G$ contains $Q_5$, a contradiction to the fact that $G$ is $Q_5$-free.

\smallskip
 Since $G$ is ($P_6,C_6$)-free, we see that every minimal dominating subgraph of $G$ is $P_4$-free (see also \cite{1993}). So there exists a minimal dominating subgraph $H\in J_1$ which is ${\cal{L}}_2$-free. This completes the proof.
 \end{proof}

\section{The class of ($P_7$,\,$C_7$,\,$C_4$,\,gem)-free graphs}

Let ${\cal F} := \{P_7,C_7,C_4,\text{gem}\}$.  In this section, we give a structural description of  ${\cal F}$-free graphs, and show that the class of ${\cal F}$-free graphs is $\chi$-bounded.

Let $F$ be a $C_6:=$ $v_1$-$v_2$-$v_3$-$v_4$-$v_5$-$v_6$-$v_1$.

Let $F_1$ be the graph obtained from $F$ by adding vertices $y_1,y_4$, and edges $y_1v_1,y_1y_4$ and $y_4v_4$.

Let $F_2$ be the graph obtained from $F$ by adding vertices $x_1,z_1$, and edges $x_1v_1,x_1v_2,x_1z_1,z_1v_1$ and $z_1v_4$.

Throughout this section, we follow the convention that if the set of vertices $\{a_1,a_2,a_3,a_4,x\}$ induces a gem, then $a_1$-$a_2$-$a_3$-$a_4$ is an induced path in the neighborhood of $x$.

\begin{theorem}\label{Thm:C6Prop}
  Let $G$ be a connected $\cal{F}$-free graph that contains an induced $C_6$, say $v_1$-$v_2$-$v_3$-$v_4$-$v_5$-$v_6$-$v_1$. Let $A$ be the vertex set of a max-blowup of $C_6$ contained in $G$ such that $v_i\in A_i$.
 Let $R=\{x\in V(G)\setminus A \mid x$ has no
neighbor in $A\}$. For each $i$, $i$ mod $6$, let:
\begin{eqnarray*}
X_i &=& \{x\in V(G)\setminus A \mid x \mbox{ has a neighbor in } A_i \mbox{ and a neighbor in } A_{i+1},
\mbox{ and is } \\
& &  \mbox{ anticomplete to }A\setminus (A_{i}\cup A_{i+1})\},\\
Y_i &=& \{x\in V(G)\setminus A \mid x \mbox{ has a neighbor in } A_i,
\mbox{ and is anticomplete to } A\setminus A_{i}\}, \\
Z_i &=& \{x\in V(G)\setminus A \mid x \mbox{ has a neighbor in } A_i  \mbox{ and a neighbor in } A_{i+3},
\mbox{ and is } \\
& & \mbox{ anticomplete to } A\setminus (A_{i}\cup A_{i+3})\}.
\end{eqnarray*}
Let $X:=X_1\cup\cdots\cup X_6$, $Y:=Y_1\cup\cdots\cup Y_6$,  $Z:=Z_1\cup\cdots\cup Z_6$, and $B :=X\cup Y\cup Z$.  Then the following properties hold for all $i$:
\begin{enumerate}[(a)]
\item\label{c6-a}
$B = N(A)$ and hence $V(G) = A\cup B\cup R$.
\item\label{c6-b} $X_i$ is complete to $A_i\cup A_{i+1}$.
\item\label{c6-c}
$[X_i, (X\setminus (X_{i}\cup X_{i+3}))\cup (Y\setminus (Y_{i+3}\cup Y_{i+4}))\cup Z_{i+2}\cup Z_{i-1}\cup R]$, $[Y_i,  (Y\setminus(Y_{i}\cup Y_{i+3})\cup (Z\setminus Z_{i})\cup R]$,
 $[Z_i, Z\setminus (Z_i\cup Z_{i+3})]$ are  empty.
\item\label{c6-d} One of $X_i$ and $X_{i+1}\cup X_{i-1}$ is empty, one of $X_i$ and $Y_{i+2}\cup Y_{i-1}$ is empty, and one of
$Y_i$ and $Y_{i+2} \cup Y_{i-2}$ is empty.
\item\label{c6-e} If $[X_i, X_{i+3}\cup Y_{i+3}\cup Y_{i+4}]$ is not empty,  then (i) $Z_{i+2}\cup Z_{i+5}$ is empty, and (ii) $N(A_{i-1})=A_i\cup A_{i-2}$ or $N(A_{i+2})=A_{i+1}\cup A_{i+3}$.
\item\label{c6-f1} If $G$ is $F_1$-free, then $[Y_i, Y_{i+3}]$ is empty.

\item\label{c6-f} If $G$ is $F_2$-free, then $[X_i, Z_i\cup Z_{i+1}]$ is empty.

\end{enumerate}
\end{theorem}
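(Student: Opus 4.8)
The plan is to prove each of (a)--(g) by contradiction: assuming a stated property fails, pick witnessing vertices and exhibit one of the excluded induced subgraphs --- $P_7$, $C_7$, $C_4$, the gem (and $F_1$, $F_2$ for the last two items) --- or else contradict the maximality of the blow-up $A$. Throughout I use the rigid adjacency pattern of a blow-up of $C_6$ (each $A_j$ is a clique, $A_j$ is complete to $A_{j-1}\cup A_{j+1}$ and anticomplete to $A_{j-2}\cup A_{j+2}\cup A_{j+3}$) and three elementary observations. First, from $C_4$-freeness: if $x\notin A$ has a neighbour in $A_j$ and a neighbour in $A_{j+2}$, then $x$ is complete to $A_{j+1}$ (otherwise $x$, those two neighbours, and a non-neighbour of $x$ in $A_{j+1}$ induce a $C_4$). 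Second, from gem-freeness: $x$ cannot have a neighbour in each of $A_j, A_{j+1}, A_{j+2}, A_{j+3}$, since one neighbour chosen from each induces a $P_4$ all of whose vertices lie in $N(x)$. Third: for any vertex $v$, an induced $P_4$ contained in $N(v)$ yields a gem centred at $v$.

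For (a), let $x\in N(A)$ and $S=\{j : N_{A_j}(x)\neq\emptyset\}$. By the first observation $S$ contains the midpoint of any of its distance-$2$ pairs, and by the second it has no four consecutive indices; a short computation modulo $6$ then leaves $S\in\{\{j\},\{j,j+1\},\{j,j+3\},\{j,j+1,j+2\}\}$. To rule out $S=\{j,j+1,j+2\}$: $x$ is complete to $A_{j+1}$, and since $x$ is anticomplete to $A_{j+3}\cup A_{j+4}\cup A_{j+5}$, maximality of $A$ forces $x$ to miss a vertex $a'\in A_j$ (say); taking $a\in N(x)\cap A_j$, $c\in N(x)\cap A_{j+2}$ and any $b\in A_{j+1}$, the set $\{a',a,x,c\}$ induces a $P_4$ inside $N(b)$, a gem. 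Hence $x\in X_j\cup Y_j\cup Z_j$, proving (a). Part (b) follows from the same trick: a vertex $x\in X_i$ missing some $a'\in A_i$ gives, with $a\in N(x)\cap A_i$ and $b\in N(x)\cap A_{i+1}$, the induced $P_4$ $x$-$b$-$a'$-$v_{i-1}$ inside $N(a)$, a gem; symmetrically $x$ is complete to $A_{i+1}$.

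Parts (c) and (d) are proved uniformly by routing long induced paths. Each $B$-vertex touches $A$ only ``locally'' --- within two consecutive cliques --- so two vertices whose reaches are far apart on the $C_6$ (or a $B$-vertex and a vertex of $R$) are joined, through their $A$-neighbours and a suitable long arc of the $C_6$, by an induced $P_7$: for instance an edge from $x\in X_i$ to $r\in R$ produces the induced $P_7$ $v_{i+2}$-$v_{i+3}$-$v_{i+4}$-$v_{i+5}$-$v_i$-$x$-$r$, an edge from $x\in X_i$ to a $Y_i$-vertex $y$ produces $y$-$x$-$v_{i+1}$-$v_{i+2}$-$v_{i+3}$-$v_{i+4}$-$v_{i+5}$, and an edge from $x\in X_i$ to $x'\in X_{i+1}$ produces $x$-$v_i$-$v_{i-1}$-$v_{i-2}$-$v_{i+3}$-$v_{i+2}$-$x'$; the remaining pairs in (c), (d) follow by the analogous routings, and the adjacencies deliberately left open in (c) (an $X_i$-vertex with $X_{i+3}$, $Y_{i+3}$, $Y_{i+4}$, $Z_i$, or $Z_{i+1}$) are exactly those where the two reaches together cover enough of the cycle to block every such $P_7$. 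Part (e)(i) is the $C_7$ analogue: if $x\in X_i$ realises one of these open adjacencies, say $xx'\in E$ with $x'\in X_{i+3}$, and $z\in Z_{i+2}$ has $A$-neighbours $c\in A_{i+2}$, $c'\in A_{i+5}$, then (using (b), and (c) to see $z$ is adjacent to neither $x$ nor $x'$) the vertices $z,c,v_{i+1},x,x',v_{i+4},c'$ induce a $C_7$, so $Z_{i+2}=Z_{i+5}=\emptyset$; the cases $x'\in Y_{i+3}$ or $Y_{i+4}$ use the similar routing through $A_{i+3}$. For (e)(ii): since $X_i\neq\emptyset$, part (d) gives $X_{i-1}=X_{i+1}=Y_{i-1}=Y_{i+2}=\emptyset$ and (i) gives $Z_{i-1}=Z_{i+2}=\emptyset$, whence (using (b)) $N(A_{i-1})=A_{i-2}\cup A_i$ is equivalent to $X_{i-2}=\emptyset$ and $N(A_{i+2})=A_{i+1}\cup A_{i+3}$ to $X_{i+2}=\emptyset$; so it suffices to show the open adjacency of $X_i$ makes one of $X_{i-2}, X_{i+2}$ empty. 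This is immediate from (d) when the adjacency is with $X_{i+3}$ (then $X_{i+3}$ and $X_{i+4}=X_{i-2}$ cannot coexist), and when it is with $Y_{i+3}$ one exhibits, for $u\in X_{i-2}$, $y'\in Y_{i+3}\cap N(x)$, $d\in N(y')\cap A_{i+3}$, the induced $P_7$ $u$-$v_{i-1}$-$v_i$-$x$-$y'$-$d$-$v_{i+2}$; the $Y_{i+4}$ case is the mirror image.

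Finally, parts (f) and (g) are the most direct. Given an edge $yy'$ with $y\in Y_i$, $y'\in Y_{i+3}$ (for (f)), or an edge $xz$ with $x\in X_i$ and $z\in Z_i$ or $Z_{i+1}$ (for (g)), choose one representative from each $A_j$, taking the representatives of the two relevant cliques to be $A$-neighbours of $y,y'$ (resp.\ of $x,z$); these six vertices induce a $C_6$, and $y,y'$ (resp.\ $x,z$) attach to it precisely as $y_1,y_4$ do in $F_1$ (resp.\ $x_1,z_1$ in $F_2$), giving the excluded graph. The main obstacle is (e)(ii): unlike everything else it is not a single forbidden-subgraph check but needs the preliminary reduction --- via (c), (d) and (e)(i) --- to emptiness of $X_{i\pm2}$, followed by a carefully chosen routing; in the other parts the only real care required is the bookkeeping modulo $6$ that keeps the $P_7$'s and $C_7$'s chordless, in particular avoiding two cycle vertices $v_a, v_b$ with $a\equiv b\pm1$ in a path unless they are consecutive there.
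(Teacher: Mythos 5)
Your proposal is correct and follows essentially the same route as the paper's proof: a case-by-case analysis that either exhibits a forbidden induced $C_4$, gem, $P_7$, $C_7$, $F_1$ or $F_2$, or contradicts maximality of the blowup, with only cosmetic deviations (e.g.\ a $C_7$ where the paper uses a gem for the $X_i$--$X_{i+1}$ edge, and a direct $P_7$ in (e)(ii) where the paper simply re-applies (d)). One small imprecision: not every remaining pair in (c) is killed by a long-arc routing --- the adjacent-reach cases such as $X_i$--$X_{i+2}$, $Y_i$--$Y_{i+1}$, $Y_i$--$Z_{i+1}$ and $Z_i$--$Z_{i+1}$ are excluded by a short $C_4$ through two consecutive cliques of $A$ rather than by a $P_7$ (and your $X_i$--$X_{i+1}$ configuration is in fact a forbidden $C_7$, not a $P_7$), but these checks are routine and lie within the toolkit you announce at the outset.
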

\begin{proof}
(\ref{c6-a}) Consider any $x\in V(G)\setminus (A\cup R)$.  For each $i$, let
$a_i$ be a neighbor of $x$ in $A_i$ (if any such vertex exists) and
$b_i$ be a non-neighbor of $x$ in $A_i$ (if any exists).  Let
$L:=\{i\mid a_i$ exists$\}$.   Then $L\neq\emptyset$ since $x\notin R$. If $\{i-1,i+1\}\subset L$ for some $i$, then since
$a_{i-1}$-$b_i$-$a_{i+1}$-$x$-$a_{i-1}$ is a $C_4$, $b_i$ does not exist and hence $x$ is complete to $A_i$, and so $i\in L$.
Now up to symmetry $L$ is one of the following sets (1)--(4). In each case we make an observation.\\
(1) $L=\{i\}$ or $\{i,i+3\}$ for some $i$.  Then $x\in Y_i\cup Z_i$.\\
(2) $L=\{i,i+1\}$ for some $i$. Then $x\in X_i$.\\
(3) $L=\{i-1, i,i+1\}$ for some $i$. Then $x$ is complete to $A_i$ as proved above. Moreover, $x$ is complete to $A_{i-1}\cup A_{i+1}$, for otherwise $\{b_{i-1},b_i,a_{i+1},x, a_i\}$ or $\{b_{i+2},b_{i+1},a_i,x, a_{i+1}\}$ induces a gem.
So $x$ can be added to $A_i$, contradicting the
maximality of $A$.\\
(4) $L=\{i-1, i,i+1,i+2\}$ for some $i$ or $|L|=6$. Then $\{a_{i-1}, a_i, a_{i+1}, a_{i+2},x\}$ induces
a gem for some $i$.

So we conclude that $x\in B$, and hence (a) holds. $\lozenge$

 (\ref{c6-b}) Any $x\in X_i$ is complete to $A_{i}\cup A_{i+1}$, for otherwise $\{b_{i-1},b_i,a_{i+1},x, a_i\}$ or $\{b_{i+2},b_{i+1},a_i,x, a_{i+1}\}$ induces a gem.  So (\ref{c6-b}) holds. $\lozenge$

 Note that for each $i$, $Z_i =Z_{i+3}$ by the symmetry of $C_6$ and $i$ is mod 6. Also by (\ref{c6-b}), any $x\in X_i$ is complete to $\{v_i, v_{i+1}\}$.
From now on, we use the following notation for each $i$. For a vertex $p\in Y_i$, we let
$a_i$ be a neighbor of $p$ in $A_i$. Also, for a vertex $q\in  Z_i$, we let
$a_i'$ and $a_{i+3}'$ be neighbors of $q$, respectively, in $A_i$  and $A_{i+3}$. We prove the statements (\ref{c6-c})--(\ref{c6-f}) for $i=1$; for other cases of $i$ the proof is similar.

(\ref{c6-c}) If $x\in X_1$ and $y\in X_2$, then $\{v_1,x,y,v_3,v_2\}$ induces a gem.
If $x\in X_1$ and  $y\in X_3\cup Y_3 \cup Z_3$, then $x$-$v_2$-$v_3$-$y$-$x$ or $x$-$v_2$-$a_3$-$y$-$x$ or $x$-$v_2$-$a_3'$-$y$-$x$ is a $C_4$. If $x\in X_1$ and  $y\in Y_1$, then $y$-$x$-$v_2$-$v_3$-$v_4$-$v_5$-$v_6$ is a $P_7$. If $x\in Y_1$ and $y\in Y_2\cup Z_2$, then
  $x$-$a_1$-$a_2$-$y$-$x$ or $x$-$a_1$-$a_2'$-$y$-$x$ is a $C_4$. If $x\in Y_1$ and $y\in Y_3$, then $y$-$a_3$-$v_4$-$v_5$-$v_6$-$a_1$-$x$-$y$ is a $C_7$.
 If $x\in X_1\cup Y_1$ and $y\in R$, then $v_3$-$v_4$-$v_5$-$v_6$-$v_1$-$x$-$y$ or $v_3$-$v_4$-$v_5$-$v_6$-$a_1$-$x$-$y$ is a $P_7$. If $x\in Z_1$ and $y\in Z_2$, then $x$-$a_1'$-$a_2'$-$y$-$x$ is a $C_4$. If $xy$ belongs to one of the sets listed in (\ref{c6-c}), then it is symmetric to one of the cases showed above. This proves (\ref{c6-c}). $\lozenge$

(\ref{c6-d}) It is sufficient to prove $X_i\cup Y_i$ or $X_{i+1}\cup Y_{i+2}$ is empty. If there are vertices $x\in X_1\cup Y_1$ and $y\in X_2\cup Y_3$, then $x$ has a neighbor $a_1\in A_1$, $y$ has a neighbor $a_3\in A_3$.  By (\ref{c6-c}), $x$ is not adjacent to $y$. But then $y$-$a_3$-$v_4$-$v_5$-$v_6$-$a_1$-$x$ is a $P_7$.  
 So (\ref{c6-d}) holds. $\lozenge$

(\ref{c6-e}) $(i)$ Let $x \in X_1$ and $y \in X_4\cup Y_4\cup Y_5$ be adjacent. Suppose to the contrary that $Z_3 (=Z_6)\neq \emptyset$, and let $z_3\in Z_3$.  Then by (\ref{c6-c}), $z_3$ is anti-complete to $\{x, y\}$.  But now $y$-$x$-$v_1$-$a_6'$-$z_3$-$a_3'$-$v_4$ is a $P_7$ or $y$-$x$-$v_1$-$a_6'$-$z_3$-$a_3'$-$v_4$-$y$ is a $C_7$, a contradiction.

$(ii)$ If $[X_i, X_{i+3}\cup Y_{i+3}]\neq \emptyset$, then by ($\ref{c6-e}$:(i)), $Z_{i-1}=\emptyset$, and by ($\ref{c6-d}$), $X_{i-1}\cup X_{i-2}\cup Y_{i-1}=\emptyset$. So $N(A_{i-1})=A_i\cup A_{i-2}$.  If $[X_i, Y_{i+4}]\neq \emptyset$, then by ($\ref{c6-e}$:(i)), $Z_{i+2}=\emptyset$, and by ($\ref{c6-d}$), $X_{i+1}\cup X_{i+2}\cup Y_{i+2}=\emptyset$. So $N(A_{i+2})=A_{i+1}\cup A_{i+3}$.
Thus (\ref{c6-e}) holds. $\lozenge$

(\ref{c6-f1}) Suppose that there is an edge $xy$ with $x\in Y_1$ and $y\in Y_4$. Let $a_1$ be a neighbor of $x$ in $A_1$, and $a_4$ be a neighbor of $y$ in $A_4$. Now $\{a_1,v_2,v_3,a_4,v_5,v_6,x,$ $y\}$ induces $F_1$. So (\ref{c6-f1}) holds. $\lozenge$

(\ref{c6-f}) Suppose, up to symmetry, that there are adjacent vertices $x\in X_1$ and $z\in Z_1$. Since $x$ is complete to $A_1\cup A_2$ (by (\ref{c6-b})), we have $x$ is complete to $\{a_1',v_2\}$. But then $\{a_1',v_2,v_3,a_4',v_5,v_6,x,z\}$ induces $F_2$. So (\ref{c6-f}) holds. $\lozenge$

This completes the proof of the theorem.
\end{proof}

\begin{theorem}\label{thm:F1F2}
 Let $G$ be a connected $\cal{F}$-free graph. Then the following hold:
 \begin{enumerate}[(i)]
 \item If $G$ contains an $F_1$, then $G$ has a bisimplicial vertex.
 \item If $G$ contains an $F_2$, then either $G$ has a clique cutset or $G$ has a bisimplicial vertex.
 \end{enumerate}
 \end{theorem}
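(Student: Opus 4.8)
The plan is to exploit the structural description from Theorem~\ref{Thm:C6Prop} applied to an induced $C_6$ inside the $F_1$ (respectively $F_2$). Since $G$ is $\mathcal{F}$-free and contains $F_1$ (resp.\ $F_2$), in particular $G$ contains an induced $C_6$, so we may fix a max-blowup $A = A_1\cup\cdots\cup A_6$ of that $C_6$ and the partition $V(G) = A\cup X\cup Y\cup Z\cup R$ together with all the adjacency restrictions (a)--(g). The key point is that $F_1$ being present forces, by part (\ref{c6-f1}) contrapositive reasoning is not available (that part \emph{assumes} $F_1$-freeness), so instead I would argue that the copy of $F_1$ sitting in $G$ must be realised with its two extra vertices $y_1,y_4$ lying in $Y_1,Y_4$ with an edge between them, i.e.\ $[Y_1,Y_4]\neq\emptyset$; symmetrically an $F_2$ forces $[X_1,Z_1]\neq\emptyset$ (or a symmetric pair). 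One first has to check this — that a copy of $F_1$ in $G$ can always be taken so that the $C_6$ it contains is the distinguished one and the pendant-path vertices land in the diagonal $Y$-classes — which may require first choosing the $C_6$ of the theorem to be the one coming from a fixed $F_1$ (resp.\ $F_2$) and then verifying the two apex-type vertices cannot attach to $A$ in any richer way without creating a forbidden induced subgraph.

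Granting that, for part (i) I would show that when $[Y_1,Y_4]\neq\emptyset$ one of the blowup classes, or one of the $Y$-vertices, is bisimplicial. Fix an edge $x\in Y_1$, $y\in Y_4$. By (\ref{c6-c}), $x$ is anticomplete to $Y\setminus(Y_1\cup Y_4)$, to $Z\setminus Z_1$, and to $R$; by (\ref{c6-d}) several $X$- and $Y$-classes must be empty; by (\ref{c6-e}) the diagonal $Z$-classes are killed. The aim is to whittle $N(x)$ (or the neighbourhood of a carefully chosen vertex of $A_1$) down to a union of two cliques. A natural candidate is a vertex $a_1\in A_1$ that is adjacent to $x$: its neighbours are $A_2\cup A_6$ (cliques complete to each other only partially), parts of $X_1, X_6, Y_1, Z_1$, and we use the $F_1$/gem/$C_4$/$P_7$ constraints to argue these collapse into at most two cliques. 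I would push the argument so that after deleting everything anticomplete to $\{x,y\}$ we are left with a very restricted graph near the diagonal, and then either exhibit $a_1$ (or $x$ itself) as bisimplicial, or extract a clique cutset separating the $C_6$-side from the rest — but (i) claims a bisimplicial vertex outright, so the cleaner route is probably to show $Y_1$ together with $A_1$ has a vertex whose neighbourhood is $(\text{clique in }A_2\cup X_1)\cup(\text{clique in }A_6\cup\cdots)$, using $C_4$-freeness to force $A_2,A_6$ to behave and gem-freeness to force the rest.

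For part (ii), with $[X_1,Z_1]\neq\emptyset$ say $x\in X_1$, $z\in Z_1$ adjacent, I would first invoke (\ref{c6-f}) only as motivation (it doesn't directly apply since $G$ need not be $F_2$-free globally in the hypothesis — wait, $G$ is $\mathcal F$-free, and $F_2$ is not in $\mathcal F$; so (\ref{c6-f}) genuinely does not apply, which is the whole point of the case). I would analyse the neighbourhood of the vertex $a_1'\in A_1\cap N(z)$: by (\ref{c6-b}) $x$ is complete to $A_1\cup A_2$, and $z$ reaches into $A_4$, and one shows that the set $A_1\cup(X_1\cap N(z))$ or similar forms a clique that, once removed, disconnects $z$ and its $A_4$-side neighbours from the $C_6$; if instead no such clique cutset materialises, the extra adjacencies forced by its absence (again via $C_4$-, gem-, $P_7$-, $C_7$-freeness) pin down a bisimplicial vertex. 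The main obstacle I anticipate is exactly this dichotomy bookkeeping: ruling out, case by case, every way the vertices of $X_1\cup Z_1\cup A_1\cup A_4$ and the far classes $X_4,Y_4$ could interact, and in each surviving configuration locating the clique cutset or the bisimplicial vertex explicitly. The $C_7$-constraint is likely the workhorse for the long-path exclusions, while $C_4$-freeness is what forces the candidate neighbourhoods to split into two cliques; gem-freeness handles the ``three consecutive classes'' obstructions. I would organise the proof as: (1) reduce to the normalized $C_6$ and the existence of the relevant cross-edge; (2) use (a)--(g) to empty out as many classes as possible; (3) in the residual graph, either read off a clique cutset (automatic in case (ii)) or verify a named vertex is bisimplicial by checking its neighbourhood is two cliques.
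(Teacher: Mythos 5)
Your high-level plan coincides with the paper's: fix the $C_6$ inside the $F_1$ (resp.\ $F_2$), take the max-blowup $A$, invoke Theorem~\ref{Thm:C6Prop}, and use the cross-edge $[Y_1,Y_4]\neq\emptyset$ (resp.\ $[X_1,Z_1]\neq\emptyset$) to empty classes until a bisimplicial vertex or clique cutset appears. But as written the proposal has genuine gaps at exactly the decisive points. First, your claim that ``by (\ref{c6-e}) the diagonal $Z$-classes are killed'' is not available: Theorem~\ref{Thm:C6Prop}(\ref{c6-e}) is triggered only by an edge in $[X_i, X_{i+3}\cup Y_{i+3}\cup Y_{i+4}]$, and a $Y_1$--$Y_4$ edge does not satisfy its hypothesis. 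The paper instead eliminates $Z_3\,(=Z_6)$ by exhibiting an explicit $C_7$ through $y_1,v_1,a_6,z,a_3,v_4,y_4$ (and, in case (ii), a $P_7$ through $z_1,x_1$); some such direct argument is needed and you do not supply one. Second, your candidate bisimplicial vertex in (i) (a vertex $a_1\in A_1\cap N(x)$, or $x\in Y_1$ itself) is neither verified nor clearly correct: $N(a_1)$ contains $A_2$, $A_6$ (anticomplete to each other) together with possible vertices of $X_1\cup Y_1\cup Z_1$ anticomplete to both, so it does not obviously split into two cliques. The paper's choice is a vertex ``far'' from the attachment, namely $v_6$, which works precisely because (\ref{c6-d}) plus the $C_7$ argument empty all of $X_5, X_6, Y_6, Z_6$, leaving $N(v_6)=A_1\cup\bigl(A_5\cup(A_6\setminus\{v_6\})\bigr)$.

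For part (ii) the same issue is sharper: the whole content of the paper's argument is the dichotomy that (after showing $Z_3=\emptyset$ and, via (\ref{c6-c}), (\ref{c6-b}) and gem-freeness, that $[X_5,(B\setminus X_5)\cup R]=\emptyset$) either $X_5=\emptyset$ or $A_5\cup A_6$ is a clique cutset separating a component of $G[X_5]$; only then is $v_6$ bisimplicial. Your proposal replaces this with a promise of ``dichotomy bookkeeping'' around $a_1'$ and $A_1\cup(X_1\cap N(z))$, which is not carried out and does not identify the actual cutset or the actual bisimplicial vertex. (Your concern about whether $y_1,y_4$ necessarily land in $Y_1,Y_4$ after the blowup is a fair one -- the paper simply asserts it -- but you do not resolve it either.) In short: same strategy as the paper, but the steps that make it a proof are missing or rest on an inapplicable lemma.
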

\begin{proof} We may assume that $G$ contains an $F_1$ or an $F_2$ with the same vertex-set and edge-set as defined earlier.
Since both $F_1$ and $F_2$ contain an induced $C_6$, let $A$ be the vertex set of a max-blowup of $C_6$ contained in $G$ such that $v_i\in A_i$. Now we partition the vertex-set of $G$ as in Theorem~\ref{Thm:C6Prop}, and we use the properties in Theorem~\ref{Thm:C6Prop} with the same notation.
\smallskip

\noindent{\it Proof of $(i)$}: Suppose that $G$ contains $F_1$. Note that $y_1\in Y_1$ and $y_4\in Y_4$. Then by Theorem~\ref{Thm:C6Prop}(\ref{c6-d}), $X_5\cup X_3\cup Y_2\cup Y_6$ is empty, and  one of $X_1$ and $X_6$ is empty. We may assume, up to symmetry, that $X_6 =\emptyset$. Now we claim that $Z_3(=Z_6)=\emptyset$. Suppose there is a vertex $z\in Z_3(=Z_6)$, then $z$ has a neighbor  $a_3\in A_3$ and $a_6\in A_6$, and since $z$ is anticomplete to $\{y_1,y_4\}$ (by Theorem~\ref{Thm:C6Prop}(\ref{c6-c})), we have $y_1$-$v_1$-$a_6$-$z$-$a_3$-$v_4$-$y_4$-$y_1$ a $C_7$, which is a contradiction. So $Z_3=\emptyset$.
Now we see that $N(v_6)$ is a union of two cliques $A_1$ and $A_5\cup (A_6\setminus \{v_6\})$, and hence $v_6$ is a bisimplicial vertex. This proves (i).

\medskip
\noindent{\it Proof of $(ii)$}: Suppose that $G$ contains $F_2$. Assuming that $G$ has no clique cutset, we show that $G$ has a bisimplicial vertex. Note that $x_1\in X_1$ and $z_1\in Z_1$.
Then by Theorem~\ref{Thm:C6Prop}(\ref{c6-b}), $x_1$ is complete to $A_1\cup A_2$, and by Theorem~\ref{Thm:C6Prop}(\ref{c6-d}), $X_2\cup X_6\cup Y_3\cup Y_6$ is empty. Moreover, by the definition of $Z_1$, $z_1$ has a neighbor in $A_4$, say $a_4$. Now here too we claim that $Z_3=\emptyset$. Suppose there is a vertex $z_3\in Z_3$, then $z_3$ has a neighbor  $a_3\in A_3$ and $a_6\in A_6$, and since $z_3$ is anticomplete to $\{x_1,z_1\}$ (by Theorem~\ref{Thm:C6Prop}(\ref{c6-c})), we have $z_1$-$x_1$-$v_2$-$a_3$-$z_3$-$a_6$-$v_5$ a $P_7$, which is a contradiction. So $Z_3$ is empty. Next we claim that:
\begin{equation}\label{nos}
\mbox{$[X_5, (B\setminus X_5)\cup R]$  is empty}.
\end{equation}
Proof of $(\ref{nos})$: Suppose that there is an edge $pq$ with $p\in X_5$ and $q\in (B\setminus X_5)\cup R$. Then by Theorem~\ref{Thm:C6Prop} (\ref{c6-c}), we see that $q\in Z_2\cup Y_2$, and hence $q$ has a neighbor $a_2\in A_2$. Since $p$ is complete to $A_6$ (by Theorem~\ref{Thm:C6Prop} (\ref{c6-b})), $pv_6\in E$. From Theorem~\ref{Thm:C6Prop} (\ref{c6-c}) by choosing the appropriate sets, we see that $p$ is anticomplete to $\{x_1,z_1\}$, and $qz_1\notin E$. Hence $z_1$-$v_1$-$a_2$-$q$ is a $P_4$. Also $x_1$ is complete to $A_2$ (by Theorem~\ref{Thm:C6Prop} (\ref{c6-b})), and since $\{z_1,v_1,a_2,q,x_1\}$ can not induce a gem, we have $qx_1\notin E$. But now we see that $v_6$-$p$-$q$-$a_2$-$x_1$-$z_1$-$a_4$ is a $P_7$, which is a contradiction. This proves (\ref{nos}). $\lozenge$

\medskip
Next we claim that:
\begin{equation}\label{emps}
\mbox{$X_5$  is empty}.
\end{equation}
Proof of $(\ref{emps})$: Suppose $x_5\in X_5$. Let $Q$ be a connected component in $G[X_5]$ containing $x_5$. Then by (\ref{nos}), we see that $N(Q)= A_5\cup A_6$ which is a clique. This implies that $A_5\cup A_6$ is a clique cutset separating $Q$ from $G[V(G)\setminus (Q\cup A_5\cup A_6)]$, which is a contradiction. This proves (\ref{emps}).  $\lozenge$

\medskip
Now by the above observations we note that $N(v_6)$ is a union of two cliques $A_1$ and $A_5\cup (A_6\setminus \{v_6\})$, and hence $v_6$ is a bisimplicial vertex. This proves (ii).

Thus the proof of Theorem~\ref{thm:F1F2} is complete.
\end{proof}

Let $F_3$ be the graph obtained from $F$ by adding vertices $z_1$ and $r$, and edges $z_{1}v_1,z_{1}v_4$ and $rz_{1}$.

\begin{theorem}\label{thm:F3}
 Let $G$ be a connected $\cal{F}$-free graph that contains an induced $F_3$. Then $G$ has a clique-cutset or
  $G$ has a bisimplicial vertex or
 $G$ is a blowup of the Petersen graph.

 \end{theorem}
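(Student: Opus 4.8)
The plan is to prove the equivalent statement: if $G$ has no clique-cutset and no bisimplicial vertex, then $G$ is a blowup of the Petersen graph. First I would eliminate $F_1$ and $F_2$ as induced subgraphs: by Theorem~\ref{thm:F1F2}, a connected $\mathcal F$-free graph containing an induced $F_1$ has a bisimplicial vertex, and one containing an induced $F_2$ has a clique-cutset or a bisimplicial vertex, so under our assumption $G$ is $(F_1,F_2)$-free; hence Theorem~\ref{Thm:C6Prop}(\ref{c6-f1}) and~(\ref{c6-f}) are available, giving $[Y_i,Y_{i+3}]=\emptyset$ and $[X_i,Z_i\cup Z_{i+1}]=\emptyset$ for every $i$. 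Starting from the given induced $F_3$, whose vertex set consists of a $C_6$ together with a vertex $z_1$ adjacent to $v_1,v_4$ and a further vertex $r$ adjacent to $z_1$, I would fix a max-blowup $A=A_1\cup\cdots\cup A_6$ of that $C_6$ with $v_i\in A_i$ and form the partition $V(G)=A\cup B\cup R$ with $B=X\cup Y\cup Z$ as in Theorem~\ref{Thm:C6Prop}. The case analysis in the proof of Theorem~\ref{Thm:C6Prop}(\ref{c6-a}) shows that a vertex with neighbours in $A_1$ and $A_4$ and also in some further $A_j$ is either addable to some $A_i$ or creates a gem, so we may assume $z_1\in Z_1$; in particular $Z\neq\emptyset$ and $z_1$ has a neighbour (namely $r$) outside $A$, and it is this non-triviality that ultimately forces the Petersen structure rather than a degenerate one.

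Next I would show $X=\emptyset$. Since $Z_i=Z_{i+3}$, the three classes $Z_i,Z_{i+1},Z_{i+2}$ exhaust $Z$, so combining $[X_i,Z_i\cup Z_{i+1}]=\emptyset$ with Theorem~\ref{Thm:C6Prop}(\ref{c6-c}) (which in particular gives $[X_i,Z_{i+2}\cup Z_{i-1}]=\emptyset$) yields $[X_i,Z]=\emptyset$; using Theorem~\ref{Thm:C6Prop}(\ref{c6-c}) once more, every neighbour of a component $Q$ of $G[X_1]$ lying outside $A_1\cup A_2$ must lie in $X_4\cup Y_4\cup Y_5$. If some vertex of $X_1$ has a neighbour there, then Theorem~\ref{Thm:C6Prop}(\ref{c6-e})(ii) forces $N(A_6)=A_1\cup A_5$ or $N(A_3)=A_2\cup A_4$; in the first case $N(v_6)$ is contained in the union of the two cliques $(A_6\setminus\{v_6\})\cup A_5$ and $A_1$, so $v_6$ is bisimplicial, and symmetrically $v_3$ is bisimplicial in the second case --- a contradiction. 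Otherwise $N(Q)\subseteq A_1\cup A_2$, a clique, so $A_1\cup A_2$ is a clique-cutset separating $Q$ from $v_4$ --- again a contradiction. By symmetry $X=\emptyset$.

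The core of the argument is a sequence of claims, each asserting that a forbidden feature yields a clique-cutset or a bisimplicial vertex, which together pin the graph down. With $X=\emptyset$, Theorem~\ref{Thm:C6Prop}(\ref{c6-c}),(\ref{c6-f1}) give $N(Y_i)\subseteq A_i\cup Y_i\cup Z_i$; if $Y_i$ were anticomplete to $Z_i$ then a component of $G[Y_i]$ would be separated by the clique $A_i$, while if $Y_i$ has an edge to $Z_i$ then, using that such a $Z_i$-vertex is anticomplete to $A\setminus(A_i\cup A_{i+3})$ and to $Z\setminus Z_i$ and that $R$ is anticomplete to $A\cup Y$, one extracts an induced $P_7$ or $C_7$ running through the six-cycle --- so $Y=\emptyset$. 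Similarly: if some $a\in A_i$ is anticomplete to $Z_i$ (and $Y=\emptyset$) then $N(a)$ is the union of the two cliques $(A_i\setminus\{a\})\cup A_{i+1}$ and $A_{i-1}$, so $a$ is bisimplicial; hence every vertex of $A_i$ has a neighbour in $Z_i$, and repeated use of $C_4$-freeness upgrades this to ``$Z_i$ is complete to $A_i$ and to $A_{i+3}$'', while $C_4$- and $C_7$-freeness together with connectedness force each $Z_i$ to be a clique, force $R$ to be a clique, and force $R$ to be complete to $Z_1\cup Z_2\cup Z_3$; moreover if some $Z_i$, say $Z_3$, were empty then $N(v_6)$ is the union of the two cliques $(A_6\setminus\{v_6\})\cup A_5$ and $A_1$ (bisimplicial), and if $R$ were empty then any $z\in Z_i$ has $N(z)=N_{A_i}(z)\cup N_{A_{i+3}}(z)$, again the union of two cliques (bisimplicial). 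Each alternative contradicts our hypotheses.

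After all of this, $V(G)$ partitions into the ten non-empty cliques $A_1,\dots,A_6,Z_1,Z_2,Z_3,R$ with: $A_i$ complete to $A_{i-1}\cup A_{i+1}$ and anticomplete to $A_{i+2}\cup A_{i+3}$; $Z_i$ complete to $A_i\cup A_{i+3}\cup R$ and anticomplete to everything else; and $R$ complete to $Z_1\cup Z_2\cup Z_3$. Identifying $(A_1,A_2,A_3,A_4,A_5,A_6,Z_1,Z_2,Z_3,R)$ with the Petersen vertices $(u_1,u_2,u_3,w_3,w_5,u_5,w_1,w_2,u_4,w_4)$ --- outer cycle $u_1u_2u_3u_4u_5u_1$, inner cycle $w_1w_3w_5w_2w_4w_1$, spokes $u_iw_i$ --- then exhibits $G$ as a blowup of the Petersen graph. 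The main obstacle is the third paragraph: producing the induced $P_7$ or $C_7$ that rules out $Y\neq\emptyset$, and the $C_4$-based arguments that turn ``every vertex of $A_i$ meets $Z_i$'' and ``$G$ is connected'' into the exact completeness relations among $A$, $Z$ and $R$; keeping this casework organized, and checking that no residual configuration slips through without producing a clique-cutset, a bisimplicial vertex, or the Petersen pattern, is where the real work lies.
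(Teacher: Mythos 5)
Your overall skeleton matches the paper's (reduce to $(F_1,F_2)$-free via Theorem~\ref{thm:F1F2}, assume no clique cutset, take the max-blowup partition of Theorem~\ref{Thm:C6Prop}, kill $X$, then force the $A$--$Z$--$R$ structure of a Petersen blowup), and your treatment of $X$ is essentially the paper's. But the core of your third paragraph has a genuine gap, and it comes from inverting the order of the argument. You claim that once $X=\emptyset$, an edge between $Y_i$ and $Z_i$ yields an induced $P_7$ or $C_7$ ``using that such a $Z_i$-vertex is anticomplete to $A\setminus(A_i\cup A_{i+3})$ and to $Z\setminus Z_i$ and that $R$ is anticomplete to $A\cup Y$.'' This is false as a local deduction: take the $C_6$ $v_1$-$\cdots$-$v_6$-$v_1$, add $z$ adjacent to $v_1,v_4$ and $y$ adjacent to $v_1$ and $z$; this graph is $(P_7,C_7,C_4,\mathrm{gem})$-free, yet $y\in Y_1$ is adjacent to $z\in Z_1$. (In general $C_4$-freeness forces $y$ and $z$ to share a neighbour in $A_1$, and then no long path through the hexagon is induced.) The paper never rules out $[Y_i,Z_i]$ edges directly; it eliminates $Y_i$ only at the very end, after showing that $A_i\cup Z_i$ is a clique, and that step in turn rests on the $R$-analysis: since $r\in R$ and $R=R^*$ is complete to $Z$, every $Z$-vertex is ``good'', goodness implies ``pure'' (complete to $A_i\cup A_{i+3}$) via a $P_7$ built through the $R$-neighbour, and goodness makes $Z_i$ a clique via a $C_4$. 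Your sketch also asserts that ``repeated use of $C_4$-freeness'' upgrades ``every vertex of $A_i$ has a neighbour in $Z_i$'' to ``$Z_i$ is complete to $A_i\cup A_{i+3}$''; that implication does not hold either (e.g.\ $z$ adjacent only to $a_1$, $z'$ adjacent to $a_1,b_1$ and to $z$ creates no $C_4$), and the paper's corresponding claim (\ref{F3-ZGP}) again needs the $R$-neighbour to build a $P_7$.

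So the missing idea is precisely the paper's intermediate machinery on $R$: the claims that every good $Z_i$-vertex is pure and complete to $Z_i\setminus\{z\}$, that a vertex of $R$ with neighbours in two of $Z_1,Z_2,Z_3$ is complete to $Z$ (using $F_2$- and $P_7$-freeness), the partition $R=T\cup R^*\cup W$, and the clique-cutset argument showing $T\cup W=\emptyset$, whence $R=R^*\neq\emptyset$ because the $F_3$ supplies $r$. Only after that do $Z_i$ complete to $A_i\cup A_{i+3}$, $Z_i$ a clique, $Z_i\neq\emptyset$ for all $i$, and finally $Y=\emptyset$ follow. As written, your proposed order ($Y=\emptyset$ first, then the completeness relations ``by $C_4$-freeness and connectedness'') cannot be pushed through without reintroducing exactly this $R$-based casework, so the proof is incomplete at its central step.
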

 \begin{proof}
     First note that  if $G$ contains either  $F_1$ or $F_2$, then by Theorem~\ref{thm:F1F2}, $G$ has a clique cutset or $G$ has a bisimplicial vertex, and the theorem holds. So we may assume that $G$ is ($F_1$,\,$F_2$)-free.  Moreover, we may assume that $G$ has no clique cutset. We may assume that $G$ contains an $F_3$ with the same vertex-set and edge-set as defined earlier. Since $F_3$ contains an induced $C_6$, let $A$ be the vertex set of a max-blowup of $C_6$ contained in $G$ such that $v_i\in A_i$. Now we partition the vertex-set of $G$ as in Theorem~\ref{Thm:C6Prop}, and we use the properties in Theorem~\ref{Thm:C6Prop} with the same notation. Note that $z_1\in Z_1$ and $r\in R$.

      Now if $[X_i, X_{i+3}\cup Y_{i+3}\cup Y_{i+4}]$ is not empty, for some $i$,  then,  by (\ref{c6-e}) (of Theorem~\ref{Thm:C6Prop}), either $N(A_{i-1})=A_i\cup A_{i-2}$ or $N(A_{i+2})=A_{i+1}\cup A_{i+3}$. Hence either $v_{i-1}$ or $v_{i+2}$ is a bisimplicial vertex, and we conclude the theorem. So we may assume that:
 $[X_i, X_{i+3}\cup Y_{i+3}\cup Y_{i+4}]=\emptyset$, for all $i$.  Then by (\ref{c6-f}) and (\ref{c6-c}),  we see that $[X_i, (B\setminus X_i)\cup R]$ is empty, for all $i$.
Hence $N(X_i)$=$A_i\cup A_{i+1}$ is a clique, for all $i$. Since $G$ has no clique cutset, it follows that:

\begin{equation}\label{F3-X-emp}
\mbox{$X_i$  is empty, for every $i$.}
\end{equation}

Moreover, we claim that:
\begin{equation}\label{F3-Z-nemp}
\mbox{$Z_i$  is not empty, for all $i$}.
\end{equation}
Proof of (\ref{F3-Z-nemp}): Clearly, $Z_1\neq \emptyset$. If $Z_2=\emptyset=Y_2$, then $v_2$ is a bisimplicial vertex, and we conclude the theorem. If $Z_2=\emptyset$ and
$Y_2\neq \emptyset$, then by (\ref{F3-X-emp}) and (\ref{c6-c}) we see that $[Y_2, (B\setminus (Y_2\cup Y_5)) \cup R]$ is empty. Since $A_2$ is not a clique cutset, it follows that $[Y_2,Y_5]\neq \emptyset$. So there is an edge, say $y_2y_5$ with  $y_2\in Y_2$ and $y_5\in Y_5$. Then by the definition of $Y_i$'s, $y_2$ has a neighbor in $A_2$, say $a_2$, and $y_5$ has a neighbor in $A_5$, say $a_5$. But now $\{v_1,a_2,v_3,v_4,a_5,v_6, y_2,y_5\}$ induces an $F_1$. So we conclude that $Z_2$ is not empty. Likewise, $Z_3$ is not empty. This proves (\ref{F3-Z-nemp}), by the symmetry of $C_6$. $\lozenge$

\medskip
 We say a vertex $z\in Z_i$ is \emph{pure} if $z$ is complete to $A_i\cup A_{i+3}$, and is \emph{good} if $z$ has a neighbor in $R$.

\smallskip
 Now we claim that, for each $i$:
 \begin{equation}\label{F3-ZGP}
\mbox{Every good vertex in $Z_i$ is pure}.
\end{equation}
Proof of $(\ref{F3-ZGP})$: We prove for $i=1$. Let $z$ be a good vertex in $Z_1$. So there exists a neighbor of $z$ in $R$, say $r_1$. If $z$ has a non-neighbor in $A_1$, say $b_1$, then since $z$ has a neighbor in $A_4$, say $a_4$, we see that $r_1$-$z$-$a_4$-$v_3$-$v_2$-$b_1$-$v_6$ is a $P_7$. So $z$ is complete to $A_1$. Likewise, $z$ is complete to $A_4$. This proves (\ref{F3-ZGP}). $\lozenge$

     Next we claim that, for each $i$:
 \begin{equation}\label{F3-ZGC}
\mbox{Every good vertex $z\in Z_i$ is complete to $Z_{i}\setminus \{z\}$}.
\end{equation}
Proof of $(\ref{F3-ZGC})$: We prove for $i=1$. Let $z$ be a good vertex in $Z_1$.   Suppose that there exists a vertex $z'\in Z_1$ which is non-adjacent to $z$. Then by definition, $z'$ has a neighbor in $A_1$, say $a_1$,  and a neighbor in $A_4$, say $a_4$. Since $z$ is  pure (by (\ref{F3-ZGP})), we see that  $z$-$a_1$-$z'$-$a_4$-$z$ is a $C_4$.  This proves (\ref{F3-ZGC}). $\lozenge$

     Next we claim that:
      \begin{equation}\label{F3-R2ZC}
\longbox{If some vertex $x$ in $R$ has neighbors in at least two
 of the sets $Z_1, Z_2$, and $Z_3$, then $x$ is complete to $Z$.}
\end{equation}
Proof of $(\ref{F3-R2ZC})$:  We may assume, up to symmetry, that a vertex $x\in R$ is adjacent to $z_1\in Z_1$ and $z_2\in Z_2$. Then by (\ref{F3-ZGP}), $z_1$ and $z_2$ are pure. First suppose that there exists a vertex $z_1'\in Z_1$ which is non-adjacent to $x$. By definition, $z_1'$ has a neighbor in $A_1$, say $a_1$,  and a neighbor in $A_4$, say $a_4$. Since $z_1$ is  pure,  by (\ref{F3-ZGC}), $z_1$ is complete to $\{a_1,a_4,z_1'\}$. Then since $z_2$ is pure, we see that $\{z_1,a_1,v_6,v_5,z_2,x,z_1',a_4\}$ induces an $F_2$. So $x$ is complete to $Z_1$. Likewise, $x$ is complete to $Z_2$. Next, suppose that there exists a vertex $z_3\in Z_3$ which is non-adjacent to $x$. By definition, $z_3$ has a neighbor in $A_3$, say $a_3$,  and a neighbor in $A_6$, say $a_6$. Then since $z_1$ is pure, we see that  $a_3$-$z_3$-$a_6$-$v_1$-$z_1$-$x$-$z_2$ is a $P_7$. So $x$ is complete to $Z_3$. Hence we conclude that $x$ is complete to $Z$. This proves (\ref{F3-R2ZC}). $\lozenge$

We define the following subsets of $R$:
\begin{eqnarray*}
T_j &=& \{x\in R \mid x \mbox{ has a neighbor in } Z_j,
\mbox{ and is anticomplete to } Z\setminus Z_{j}\}, \\
&& \mbox{for } j\in \{1,2,3\},\\
R^* &=& \{x\in R \mid x \mbox{ is complete to } Z\},
\mbox{ and } \\
W &=&  \{x\in R \mid x \mbox{ is anticomplete to } Z\}.
\end{eqnarray*}

Let $T=T_1\cup T_2\cup T_3$. Then by (\ref{F3-R2ZC}), it follows that $R=T\cup R^*\cup W$ is a partition of $R$.  Further, we have the following:
 \begin{equation}\label{F3-parRprop}
\longbox{(i) $R^*$ is a clique, and is complete to $Z$, (ii) $[T_j, W]=\emptyset$, for all $j$ and $[T_j, T_k]$ is empty, for all $j\neq k$, and
(iii)  $T\cup W$ is empty; so $R = R^*$.}
\end{equation}
Proof of $(\ref{F3-parRprop})$: By (\ref{F3-Z-nemp}), $Z_i\neq \emptyset$, for $i\in \{1,2,3\}$. Let $z_i \in Z_i$.

 $(i)$: If there are non-adjacent vertices, say $r_1$ and $r_2$ in $R^*$, then by definition of $R^*$, $\{r_1,r_2\}$ is complete to $\{z_1,z_2\}$.
 But then $r_1$-$z_1$-$r_2$-$z_2$-$r_1$ is a $C_4$. So $R^*$ is a clique. Moreover, $R^*$ is complete to $Z$ by definition. So $(i)$ holds.

$(ii)$: Let $j=1$ and suppose that there is an edge $xy$ with $x\in T_1$ and $y\in W\cup T_2$. Then $x$ is adjacent to some vertex of $Z_1$, say $z$.
If $y\in W$, then $z_2$ is anticomplete to $\{x,y\}$, and then since $z$ is pure (by(\ref{F3-ZGP})), we see that  $y$-$x$-$z$-$v_1$-$v_6$-$a_5$-$z_2$ is a $P_7$, where $a_5$ is a neighbor of $z_2$ in $A_5$. If $y\in T_2$, then $y$ has a neighbor in $Z_2$, say $z'$. Then since $z$ and $z'$ are pure, (by(\ref{F3-ZGP})), we see that $\{z,v_1,v_2,z', v_5,v_4,x,y\}$ induces an $F_1$. The other cases are symmetric. So $(ii)$ holds.

$(iii)$: Since $G$ is connected, $N(W) \subseteq R^*$. Since $R^*$ is a clique (by (i)), and since $G$ has no clique cutset, we conclude that $W=\emptyset$. Also, for each $j\in \{1,2,3\}$,  we have $N(T_j)\subseteq R^*\cup Z_j$, and $R^*\cup Z_j$ is a clique by the definition of $R^*$ and (\ref{F3-ZGC}). Moreover $R^*\cup Z_j$ separates $T_j$ from $V(G)\setminus (R^*\cup Z_j\cup T_j)$. Since $G$ has no clique cutset, it follows that $T_j=\emptyset$, for each $j$.
Hence $T=\emptyset$.  So $(iii)$ holds.

This completes the proof of (\ref{F3-parRprop}). $\lozenge$

\medskip

By (\ref{F3-parRprop})(iii), $R=R^*$. Since $r\in R$, $R$ is non empty. So by (\ref{F3-parRprop})(i), every vertex in $Z_i$ is a good vertex, and hence $Z_i$ is a clique, and is complete to $A_i\cup A_{i+3}$, for each $i\in \{1,2,3\}$ (by (\ref{F3-ZGP})).

Finally, we claim that:

 \begin{equation}\label{F3-Yemp}
\mbox{$Y$ is empty.}
\end{equation}
Proof of $(\ref{F3-Yemp})$:  By Theorem~\ref{Thm:C6Prop} (\ref{c6-c}) and (\ref{c6-f1}), we see that $[Y_i,R\cup (B\setminus (Y_i\cup Z_i))]$ is empty, for every $i$. So $N(Y_i)\subseteq A_i\cup Z_i$. Since $A_i\cup Z_i$ is a clique, and since $G$ has no clique cutset, we conclude that $Y_i$ is empty, for each $i$. So (\ref{F3-Yemp}) holds. $\lozenge$

\medskip
By (\ref{F3-X-emp}) and (\ref{F3-Yemp}), $X\cup Y$ is empty. We conclude that $G =[(\cup_{i=1}^6 A_i)\cup(\cup_{j=1}^{3}Z_j)\cup R^*]$ is a blowup of the Petersen graph.\end{proof}

We use the following theorem.
\begin{theorem}[\cite{chud-bisimp}]\label{EHfree-bisimp}
Every even-hole-free graph has a bisimplicial vertex.
\end{theorem}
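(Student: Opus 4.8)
\medskip

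\noindent The plan is to prove this by induction on $|V(G)|$; this is also how the (long and delicate) argument of \cite{chud-bisimp} runs, and we only sketch its shape. So fix an even-hole-free graph $G$ and assume the statement for all smaller even-hole-free graphs. We may assume $G$ is connected, and, since every vertex of a complete graph is simplicial and hence bisimplicial, that $G$ is not complete. It is convenient to recall that a graph is the union of two (not necessarily disjoint) cliques exactly when its complement is bipartite. Hence a vertex $v$ is bisimplicial if and only if $\overline{G[N(v)]}$ contains no odd cycle, equivalently $G[N(v)]$ contains neither a stable set of size three nor an induced $\overline{C_{2k+1}}$ with $k\geq 2$. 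The goal is therefore to produce a vertex $v$ whose neighborhood avoids all these patterns.

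The first step is to dispose of graphs with a clique-cutset. If $G$ has one, one peels off a ``leaf'' atom $G'$: a smaller, even-hole-free induced subgraph, with no clique-cutset of its own, which meets the rest of $G$ exactly in a clique $K$ and for which $G'\setminus K$ is connected. Applying the inductive hypothesis to $G'$ --- in a slightly strengthened form which, when $G'$ is not complete, delivers a bisimplicial vertex of $G'$ lying outside $K$ --- one gets $v\in V(G')\setminus K$; since $K$ separates $V(G')\setminus K$ from the rest of $G$ we have $N_G(v)=N_{G'}(v)$, so $v$ is bisimplicial in $G$ as well. Thus from now on $G$ may be assumed to have no clique-cutset, and in particular (being also not complete) no simplicial vertex.

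The heart of the matter is then to find a bisimplicial vertex in a connected, clique-cutset-free, non-complete even-hole-free graph. The plan is to pick a vertex $v$ together with a connected component $D$ of $G\setminus N[v]$ by an extremal rule (for instance so as to minimise a suitable weight), and to study how $N(v)$ attaches to $D$ and to the other components of $G\setminus N[v]$. Even-hole-freeness is used through assertions of the following kind: if $u_1,u_2\in N(v)$ are non-adjacent and each has a neighbor in $D$, then a shortest induced $u_1$-$u_2$ path with interior in $D$ cannot coexist with a second such ``rung'' joining another non-adjacent pair of $N(v)$ through $G\setminus N[v]$, since two crossing rungs together with $v$ and arcs of $N(v)$ close up to an even hole. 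Iterating restrictions of this kind, and using the absence of a clique-cutset to guarantee that the needed paths exist, one argues that for the extremal $v$ the graph $G[N(v)]$ contains neither a stable set of size three nor an induced $\overline{C_{2k+1}}$, so $N(v)$ is the union of two cliques and $v$ is bisimplicial. If some sub-case obstructs this for the chosen $v$, the extremal rule forces the offending configuration inside $D$ to be strictly smaller, and the argument recurses within $D$.

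The step we expect to be the main obstacle is exactly this last one: converting a \emph{local} forbidden pattern in $N(v)$ (an anti-triangle, or an induced $\overline{C_{2k+1}}$) into a genuine even hole of $G$. This requires constructing induced paths in $G\setminus N[v]$ with prescribed endpoints on $N(v)$ and without unwanted chords, while simultaneously keeping control of the attachments of all components of $G\setminus N[v]$ to $N(v)$; this is where the bulk of the case analysis of \cite{chud-bisimp} lies, and where the right extremal vertex and the right induction measure have to be chosen with care. Once a bisimplicial vertex $v$ is produced, the induction closes. We finally record the consequence relevant here: since a bisimplicial vertex $v$ satisfies $|N(v)|\leq 2(\omega(G)-1)$, colouring $G$ by induction on $|V(G)|$ and inserting $v$ last shows that $\chi(G)\leq 2\omega(G)-1$ for every even-hole-free graph $G$.
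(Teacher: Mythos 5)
This statement is not proved in the paper at all: it is the theorem of Addario-Berry, Chudnovsky, Havet, Reed and Seymour, quoted from \cite{chud-bisimp} and used as a black box. Measured against what a proof would require, your text has a genuine gap: it is an outline of the external argument, not a proof. The decisive step --- showing that for the extremally chosen vertex $v$ the graph $G[N(v)]$ contains no stable set of size three and no induced complement of an odd cycle, by converting any such pattern into an even hole of $G$ --- is exactly the content of the (roughly forty-page) case analysis in \cite{chud-bisimp}, and you explicitly leave it undone ("this is where the bulk of the case analysis lies"). You never specify the extremal rule or the induction measure, so nothing can actually be verified. Moreover, the one concrete mechanism you do describe is not correct as stated: two "crossing rungs" through $G\setminus N[v]$ together with $v$ and arcs of $N(v)$ need not close up to an \emph{even} hole --- parity has to be controlled, and arranging that control is precisely the difficulty.

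There is a second unproved ingredient in your reduction to the clique-cutset-free case: you invoke a "slightly strengthened form" of the inductive statement guaranteeing a bisimplicial vertex of a non-complete leaf atom $G'$ that lies \emph{outside} the cutset clique $K$. That strengthening is itself a nontrivial assertion (it is how such reductions are usually organized, but it must be stated as the induction hypothesis from the start and proved in the main argument), and you neither formulate it precisely nor establish it. In the context of this paper the correct treatment is simply to cite \cite{chud-bisimp}, as the authors do; if you want to supply a proof, you would have to reproduce the actual argument of that paper rather than its silhouette. The closing remark that a bisimplicial vertex yields $\chi\leq 2\omega-1$ is fine but is not part of the statement being proved.
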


\begin{theorem}\label{thm:C6G}
 Let $G$ be a connected $\cal{F}$-free graph. Then $G$ has a clique-cutset or
  $G$ has a bisimplicial vertex or
 $G$ is a blowup of the Petersen graph.

 \end{theorem}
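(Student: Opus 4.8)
\textbf{Proof proposal for Theorem~\ref{thm:C6G}.}

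The plan is to reduce the general statement to the three special configurations already analyzed. First I would dispose of the easy case: if $G$ is even-hole-free, then $G$ has a bisimplicial vertex by Theorem~\ref{EHfree-bisimp}, and we are done. So I may assume $G$ contains an even hole. Since $G$ is $(C_4,C_7)$-free and $P_7$-free, the only even holes available have length $6$ (a hole of length $\geq 8$ contains an induced $P_7$, and $C_4$ is forbidden). Hence $G$ contains an induced $C_6$, say $v_1$-$v_2$-$\cdots$-$v_6$-$v_1$, and I can set up the whole machinery of Theorem~\ref{Thm:C6Prop}: fix a max-blowup $A = A_1\cup\cdots\cup A_6$ of this $C_6$, and partition $V(G)$ into $A\cup B\cup R$ with $B = X\cup Y\cup Z$ as there.

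Next I would run a case analysis on which of $F_1$, $F_2$, $F_3$ the graph $G$ contains, using Theorems~\ref{thm:F1F2} and~\ref{thm:F3}. If $G$ contains an induced $F_1$ or an induced $F_2$, then by Theorem~\ref{thm:F1F2} the graph has a clique cutset or a bisimplicial vertex, and the theorem holds. So assume $G$ is $(F_1,F_2)$-free. If $G$ contains an induced $F_3$, then Theorem~\ref{thm:F3} immediately gives the conclusion (clique cutset, bisimplicial vertex, or blowup of the Petersen graph). So I may further assume $G$ is $F_3$-free, i.e. $G$ is $(F_1,F_2,F_3)$-free and contains an induced $C_6$. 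The goal is now to show directly, under these hypotheses (and assuming $G$ has no clique cutset, which I may do), that $G$ has a bisimplicial vertex.

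Under these assumptions I would show each $Z_i$ is either empty or "attaches cleanly" to the hole. The key point is that $F_3$-freeness, together with Theorem~\ref{Thm:C6Prop}(\ref{c6-f}) (which applies since $F_2$-freeness holds) and part~(\ref{c6-e}), forces $N(A_{i-1})$ or $N(A_{i+2})$ to be a union of two cliques in all the remaining cases — essentially the argument of Theorem~\ref{thm:F3} but \emph{without} the vertex $r\in R$ one needs to invoke a blowup of the Petersen graph. Concretely: if some $[X_i,X_{i+3}\cup Y_{i+3}\cup Y_{i+4}]\neq\emptyset$ then (\ref{c6-e}) gives a bisimplicial $v_{i-1}$ or $v_{i+2}$; otherwise, as in Theorem~\ref{thm:F3}, $N(X_i)=A_i\cup A_{i+1}$ is a clique for every $i$, so no-clique-cutset forces every $X_i=\emptyset$; then $F_3$-freeness (the absence of the private vertex $r$) should let me argue that every $Z_i$ is complete to $A_i\cup A_{i+3}$ and that the $R$-side is attached only through cliques, so that some $v_i$ has neighborhood $A_{i-1}\cup(A_i\setminus\{v_i\})\cup A_{i+1}$ which is a union of two cliques, i.e. $v_i$ is bisimplicial. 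Throughout, holes longer than $C_6$ being unavailable ($P_7$- and $C_7$-free) keeps the analysis finite.

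The main obstacle I anticipate is the last step: ruling out, with $F_3$ forbidden and $G$ having no clique cutset, the possibility that $G$ is genuinely "Petersen-like" but with $R=\emptyset$ — in that situation none of the earlier theorems applies and I must produce a bisimplicial vertex by hand. I expect this requires showing that when $R=\emptyset$ and all $X_i,Y_i$ are empty, the sets $Z_1,Z_2,Z_3$ together with $A$ form a blowup of $C_6$ plus three "long chords", and that such a graph, being a blowup of a graph on six to nine vertices containing the hole, either has a clique cutset or has some $v_i$ whose neighborhood splits into two cliques; a short direct check on the finite underlying graph should close this. I would also double check the reduction that the only even holes are $C_6$, since that underpins invoking Theorem~\ref{EHfree-bisimp} in the complementary case.
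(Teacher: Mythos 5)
Your reduction is the same as the paper's (even-hole-freeness via Theorem~\ref{EHfree-bisimp} when there is no $C_6$, then the $F_1/F_2/F_3$ case analysis via Theorems~\ref{thm:F1F2} and~\ref{thm:F3}, then a direct argument for $(F_1,F_2,F_3)$-free graphs containing a $C_6$ with no clique cutset, with $X=\emptyset$ obtained exactly as in Theorem~\ref{thm:F3}). The genuine gap is in the final case, which is the only new content of this proof, and your plan for it rests on a misconception. You write that $F_3$-freeness "should let me argue that every $Z_i$ is complete to $A_i\cup A_{i+3}$" and that the residual configuration is a blowup of $C_6$ plus three long chords needing a finite check. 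But in Theorem~\ref{thm:F3} the purity of $Z_i$ (completeness to $A_i\cup A_{i+3}$) is deduced from the existence of a neighbor in $R$ (the "good vertex" argument); forbidding $F_3$ gives you nothing of the sort when $R$ may be empty, and there is no a priori reason for $Z_i$ to be a clique or to attach completely, so the claimed blowup structure in your residual case is not justified.

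The missing idea is much simpler: $(F_1,F_2,F_3)$-freeness forces some $Z_i$ with $i\in\{1,2,3\}$ to be empty, with no vertex of $R$ needed at all. Indeed, if $z_1\in Z_1$, $z_2\in Z_2$, $z_3\in Z_3$ with neighbors $a_j$ in the corresponding $A_j$'s, then by Theorem~\ref{Thm:C6Prop}(\ref{c6-c}) the $z_j$'s are pairwise nonadjacent, and $\{z_1,a_1,a_2,z_2,a_5,a_4,a_6,z_3\}$ induces an $F_3$: the six-cycle is $a_1$-$a_2$-$z_2$-$a_5$-$a_4$-$z_1$-$a_1$, the long-chord vertex is $a_6$, and $z_3$ itself plays the role of the pendant $r$. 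So your feared "Petersen-like but $R=\emptyset$" situation simply cannot arise. Once, say, $Z_1=\emptyset$ (and $X=\emptyset$), Theorem~\ref{Thm:C6Prop}(\ref{c6-c}) together with (\ref{c6-f1}) gives $N(Y_1)\subseteq A_1$, a clique, so the no-clique-cutset assumption yields $Y_1=\emptyset$; hence $N(A_1)=A_2\cup A_6$ and $v_1$ is bisimplicial. This step for $Y_1$ (which your sketch glosses over) is needed, but it is exactly this one-line clique-cutset argument; no case analysis on a finite underlying graph is required anywhere.
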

 \begin{proof}
First suppose that $G$ is $C_6$-free. Then since $G$ is ($P_7$,\,$C_4$)-free, $G$ is even-hole free. So by Theorem~\ref{EHfree-bisimp}, $G$ has a bisimplicial vertex.  So we may assume that $G$ contains an induced $C_6$, say $v_1$-$v_2$-$v_3$-$v_4$-$v_5$-$v_6$-$v_1$. Moreover, if $G$ contains either $F_1$ or  $F_2$ or $F_3$, then  the theorem follows by Theorems~\ref{thm:F1F2} and \ref{thm:F3}. So we may assume that $G$ is ($F_1$,\,$F_2$,\,$F_3$)-free. We may also assume that $G$ has no clique cutset. Now since $G$ contains an induced $C_6$, let $A$ be the vertex set of a max-blowup of $C_6$ contained in $G$ such that $v_i\in A_i$. Now we partition the vertex-set of $G$ as in Theorem~\ref{Thm:C6Prop}, and we use the properties in Theorem~\ref{Thm:C6Prop} with the same notation. 
Then by a similar proof of Theorem~\ref{thm:F3}, $X$ is empty. Further, we have the following:

 \begin{equation}\label{C6G-ZiEmp}
\mbox{$Z_i$ is empty, for some $i\in \{1,2,3\}$.}
\end{equation}
Proof of $(\ref{C6G-ZiEmp})$:  Suppose for each $i$, there exists $z_i\in Z_i$. By the definition of $Z_i$'s, for each $i$, $z_i$ has a neighbor in $A_i$, say $a_i$, and a neighbor in $A_{i+3}$, say $a_{i+3}$. But now  $\{z_1,a_1,a_2,z_2,a_5,a_4,a_6,z_3\}$ induces an $F_3$. So (\ref{C6G-ZiEmp}) holds. $\lozenge$

\medskip

We may assume, up to symmetry, that $Z_1$ is empty. Next we claim that:
\begin{equation}\label{C6G-Y1Emp}
\mbox{$Y_1$ is empty.}
\end{equation}
Proof of $(\ref{C6G-Y1Emp})$:   Since $X\cup Z_1$ is empty, we see that $[Y_1,R\cup (B\setminus Y_1)]$ is empty by Theorem~\ref{Thm:C6Prop} (\ref{c6-c}) and (\ref{c6-f1}). So $N(Y_1) \subseteq A_1$. Since $A_1$ is a clique, and since $G$ has no clique cutset, we conclude that $Y_1$ is empty.  So (\ref{C6G-Y1Emp}) holds. $\lozenge$

\medskip
Since $X\cup Y_1\cup Z_1$ is empty, we conclude that $N(A_1)=A_2\cup A_6$, and hence $v_1$ is a bisimplicial vertex in $G$. This completes the proof of the theorem.
 \end{proof}

To prove our next theorem we require the following result.
 \begin{theorem}[\cite{KM-SIDMA}]\label{thm:bu-peter}
If $G$ is any blowup of the Petersen graph, then $\chi(G)\le
\lceil \frac{5}{4} \omega(G)\rceil$.
\end{theorem}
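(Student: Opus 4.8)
\medskip
\noindent\textbf{Proof proposal.}

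I would phrase everything on the Petersen graph $P$ itself through its Kneser model $P=K(5,2)$: the vertices of $P$ are the $2$-subsets of $[5]=\{1,\dots,5\}$, two of them adjacent exactly when disjoint. Write the blowup as $G=P[\vec n]$, where $A_e$ is a clique of size $n_e$ for the vertex $e\in V(P)$, and set $\omega:=\omega(G)$. As $P$ is triangle-free, $\omega=\max\{\,n_e+n_f:\ e\cap f=\varnothing\,\}$, so any two disjoint pairs satisfy $n_e+n_f\le\omega$. The basic reformulation: $G$ has a proper $N$-colouring iff there is a multiset of $N$ stable sets of $P$ in which each vertex $e$ occurs at least $n_e$ times. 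The maximal stable sets of $P$ are exactly the five \emph{stars} $S_i=\{e:i\in e\}$ (size $4$) and the ten \emph{triangles} $T_{abc}=\{e:e\subseteq\{a,b,c\}\}$ (size $3$). So the goal is to pick non-negative integers $\mu_1,\dots,\mu_5$ (one per star) and $\nu_{abc}$ (one per triangle) so that $\mu_i+\mu_j+\sum_{k\notin\{i,j\}}\nu_{ijk}\ge n_{ij}$ for every pair $\{i,j\}$, with $\sum_i\mu_i+\sum_{abc}\nu_{abc}\le\big\lceil\tfrac54\omega\big\rceil$.

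\medskip
Two configurations are, I expect, the backbone — and they show the bound is tight. The first is $C_5[K_t]$: the five pairs on an induced $C_5$ of $P$ are exactly the edges of a $5$-cycle $1$-$2$-$\cdots$-$5$-$1$ of $[5]$ (the remaining five vertices of $P$ being empty), so choosing star multiplicities with $\mu_i+\mu_{i+1}\ge t$ cyclically suffices; summing the five inequalities gives $\sum_i\mu_i\ge\lceil\tfrac52 t\rceil$, attained by a suitable integral choice — $\lceil\tfrac52 t\rceil$ colours, no triangles. The second is $P[K_t]$ itself: take $\mu_1=\mu_2=\lceil t/2\rceil$, $\mu_3=\mu_4=\mu_5=\lfloor t/2\rfloor$, plus one copy of $T_{345}$ when $t$ is odd; a direct check shows every pair is covered at least $t$ times, and the total is again $\lceil\tfrac52 t\rceil$. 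In both cases $\omega=2t$, so $\lceil\tfrac52 t\rceil=\big\lceil\tfrac54\omega\big\rceil$.

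\medskip
For a general $\vec n$ I would argue by a case analysis. A vertex $e$ with $n_e$ very large is essentially isolated — for every $f$ disjoint from $e$ one has $n_f\le\omega-n_e$ — so I would peel off its clique (colouring it with at most $\omega$ reusable colours, as its few neighbours use few colours) and reduce to a smaller instance. In what remains, the \emph{heavy} vertices $\{e:n_e>\omega/2\}$ form an intersecting family (disjoint pairs still sum to $\le\omega$), so they lie inside one star $S_a$ or one triangle $T_{abc}$; I would give that star (resp.\ triangle) a dominant multiplicity to absorb the heavy demands, spread a balanced remainder over the other stars — whose mutual demands are now all $\le\omega/2$ — and throw in at most one auxiliary triangle on the complementary $3$-set to correct parity. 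The room needed is exactly what two a-priori inequalities supply: $\sum_{e\in V(P)}n_e\le5\omega$ (as $P$ is $3$-regular with $15$ edges, $3\sum_e n_e=\sum_{ef\in E(P)}(n_e+n_f)\le15\omega$) and $\sum_{e\in C}n_e\le\tfrac52\omega$ for every induced $C_5$ (from the five edge constraints of $C$) — each forcing a bound of $\lceil\tfrac54\omega\rceil$. (One might instead hope to prove directly that for blowups of $P$ the chromatic number equals the maximum of the clique bound, the odd-cycle bounds, and $\lceil\tfrac14\sum_e n_e\rceil$; that would finish at once, but it looks at least as hard.)

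\medskip
The step I expect to be the real obstacle is making this case analysis airtight while controlling the ceiling. One cannot reduce the \emph{integer} chromatic number to the vertices of the feasibility polytope $\{\vec n\ge0:n_e+n_f\le\omega\ \text{for disjoint }e,f\}$, as one could for the fractional relaxation, because $\vec n\mapsto\chi(P[\vec n])$ is not convex; nor can one simply round $\vec n$ up to an extremal vector, since that generally increases $\omega$. A star-only assignment already loses a colour in the odd-$t$ instance of $P[K_t]$ — it gives $\tfrac{5t+3}{2}$ in place of $\tfrac{5t+1}{2}$ — so the ``balanced part plus one auxiliary triangle'' device (or an equivalent parity-aware choice of multiplicities) must be carried through every branch, and it is the organisation of those branches, together with the peeling of the near-isolated large cliques, that carries the weight. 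Once the branches are in place, the multiplicities and coverage inequalities are mechanical.
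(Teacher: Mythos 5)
There is a genuine gap: the theorem's actual content is precisely the general-weight case, and that is the part you leave as a plan rather than a proof. Your setup is sound and your tight examples check out (the reformulation of colouring a blowup as covering each Kneser vertex $e$ at least $n_e$ times by a multiset of stable sets, the classification of maximal stable sets of the Petersen graph into five stars and ten triangles, the constructions for $C_5[K_t]$ and $P[K_t]$, and the two a priori inequalities $\sum_e n_e\le 5\omega$ and $\sum_{e\in C}n_e\le\frac52\omega$). But the step that would constitute the proof --- exhibiting, for an \emph{arbitrary} weight vector with $n_e+n_f\le\omega$ on disjoint pairs, star and triangle multiplicities satisfying all fifteen coverage inequalities with total at most $\lceil\frac54\omega\rceil$ --- is only described (``dominant star or triangle for the heavy vertices, balanced remainder, one auxiliary triangle''), and you yourself flag it as the real obstacle. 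Nothing in the sketch verifies that this device survives all branches; in particular the parity issue you notice for $P[K_t]$ shows that a naive balanced choice already fails by one colour, so the burden is exactly on the unexecuted case analysis. The peeling step is also not airtight as stated: removing a bag $A_e$ with $n_e$ large and ``reusing'' colours requires that the three neighbouring bags jointly miss at least $n_e$ of the $\lceil\frac54\omega\rceil$ colours, which is not automatic (their sizes can each be close to $\omega-n_e$, and whether their colours overlap depends on how the smaller instance was coloured), so the reduction has to be coordinated with the colouring of the remainder, not done independently.

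Note also that in this paper the statement is not proved at all; it is imported verbatim from \cite{KM-SIDMA}, where the weighted stable-set assignment for blowups of the Petersen graph is carried out in full. So your proposal is best read as a reconstruction of the first half of that argument (the LP-style reformulation, the extremal configurations, and the lower-bound certificates), with the combinatorial core --- the exhaustive construction of multiplicities meeting the ceiling --- still missing; until that case analysis is written out and each of the fifteen constraints is checked in every branch, the proposal does not establish the bound.
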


 \begin{theorem}\label{2w-1chrbound}
Every  $\mathcal{F}$-free graph $G$ satisfies  $\chi(G)\leq 2\omega(G)-1$.
\end{theorem}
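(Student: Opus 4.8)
The plan is to use induction on $|V(G)|$ together with the structural decomposition from Theorem~\ref{thm:C6G}. If $G$ is disconnected or has a clique cutset, standard arguments handle it: for a clique cutset $K$ splitting $G$ into pieces $G_1, G_2$ with $G_1 \cap G_2 = K$, we color each $G_i$ (which is smaller and still $\mathcal{F}$-free) by induction and permute colors so the colorings agree on $K$; since $\chi(G_i) \le 2\omega(G_i) - 1 \le 2\omega(G) - 1$, we are done. So I would reduce to the case that $G$ is connected with no clique cutset, and apply Theorem~\ref{thm:C6G}: then $G$ has a bisimplicial vertex, or $G$ is a blowup of the Petersen graph.

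For the blowup-of-Petersen case, Theorem~\ref{thm:bu-peter} gives the much stronger bound $\chi(G) \le \lceil \frac{5}{4}\omega(G)\rceil$, and one checks easily that $\lceil \frac{5}{4}\omega(G)\rceil \le 2\omega(G) - 1$ for all $\omega(G) \ge 1$ (the only case needing a glance is $\omega = 1$, where both sides are $1$; for $\omega \ge 2$ it is immediate). So that branch is finished.

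The main case is when $G$ has a bisimplicial vertex $v$, i.e. $N(v) = K_1 \cup K_2$ for cliques $K_1, K_2$. The plan is to delete $v$, color $G - v$ by induction with at most $2\omega(G) - 1 \le 2\omega(G - v) + 1$ colors — wait, we need $2\omega(G-v)-1 \le 2\omega(G)-1$, which is clear — and then extend the coloring to $v$. The vertex $v$ sees at most $|K_1| + |K_2|$ colors on its neighborhood. Since $K_1 \cup \{v\}$ and $K_2 \cup \{v\}$ are cliques in $G$, we have $|K_1| \le \omega(G) - 1$ and $|K_2| \le \omega(G) - 1$, so $|N(v)| \le 2\omega(G) - 2$; hence at least one of the $2\omega(G) - 1$ colors is free for $v$. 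This is the crux and it works cleanly because bisimpliciality bounds the degree of $v$ by $2\omega - 2$.

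The only subtlety I anticipate is bookkeeping in the clique-cutset step: one must verify that the class of $\mathcal{F}$-free graphs is closed under taking induced subgraphs (immediate, since $\mathcal{F}$-freeness is hereditary) and that gluing two colorings along a clique preserves properness (also immediate, since within $K$ all colors are already distinct and no new adjacencies are created between $G_1 \setminus K$ and $G_2 \setminus K$). I do not expect a genuine obstacle; the real content of the theorem is entirely carried by Theorem~\ref{thm:C6G} and Theorem~\ref{thm:bu-peter}, and the present statement is the easy corollary that packages them. I would also remark that the bound is tight for subgraphs of the Petersen graph containing a $C_5$: such a graph has $\omega = 2$ and $\chi = 3 = 2 \cdot 2 - 1$.
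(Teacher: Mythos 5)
Your proposal follows essentially the same route as the paper's proof: induction on $|V(G)|$, gluing along a clique cutset, using that a bisimplicial vertex has degree at most $2\omega(G)-2$ to extend a coloring of $G\setminus v$, and invoking Theorem~\ref{thm:bu-peter} for blowups of the Petersen graph arising from Theorem~\ref{thm:C6G}. One tiny slip: for $\omega(G)=1$ we have $\lceil \tfrac{5}{4}\omega(G)\rceil = 2 > 1 = 2\omega(G)-1$ (not ``both sides are $1$''), but in that case a connected blowup of the Petersen graph is edgeless, hence a single vertex with $\chi=1$, so the conclusion is unaffected.
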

\begin{proof}
  Let $G$ be any $\mathcal{F}$-free graph. We prove the theorem by induction on $|V(G)|$.  We may assume that $G$ is connected, and we apply Theorem~\ref{thm:C6G}.

  If $G$ has a clique cutset $K$, let $A,B$ be a partition of
$V(G)\setminus K$ such that both $A,B$ are non-empty and
$[A,B]=\emptyset$. Clearly $\chi(G)=\max\{\chi(G[K\cup A]),
\chi(G[K\cup B])\}$, so the desired result follows from the induction
hypothesis on $G[K\cup A]$ and $G[K\cup B]$.

If $G$ has a bisimplicial vertex $u$, then $u$ has degree at most $2\omega(G)-2$ . By induction hypothesis, we have
$\chi(G\sm u)\le 2\omega(G\sm u)-1 \leq 2\omega(G)-1$.  So we can
take any $\chi(G\sm u)$-coloring of $G\sm u$ and extend it to a
($2\omega(G)-1$)-coloring of $G$, using for $u$ a
 color that does not appear in its neighborhood.

 If $G$ is a blowup of the Petersen graph, then the theorem follows by Theorem~\ref{thm:bu-peter}.

 This completes the proof of the theorem.
\end{proof}

\section{The class of ($P_7$,\,$C_7$,\,$C_4$,\,diamond)-free graphs}

A class of graphs $\cal{G}$ is said to satisfy the \emph{Vizing bound} if $\chi(G) \leq \omega(G)+1$, for each $G\in \cal{G}$. Several classes of graphs satisfying the Vizing bound are known in the literature, see for example \cite{survey}. Let $\mathcal{H} := \{P_7,C_7,C_4, \text{diamond}\}$. In this section, we give a structural description of  ${\cal H}$-free graphs, and show that the class of ${\cal H}$-free graphs satisfies the Vizing bound.

\begin{theorem}\label{cor:diamond}
 Let $G$ be a connected $\cal{H}$-free graph that contains an induced $C_6$. Then $G$ has a clique-cutset or
  $G$ has a vertex of degree at most $2$ or $G$ is the Petersen graph.
\end{theorem}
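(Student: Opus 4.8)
The plan is to mimic the proof of Theorem~\ref{thm:C6G} but with the diamond-freeness available as an extra hypothesis, which should force the blowup parts to be trivial (each $A_i$ a single vertex) and hence pin down the Petersen-graph case to the actual Petersen graph and the bisimplicial vertex to a vertex of degree at most $2$. First I would dispose of the case that $G$ contains $F_1$, $F_2$, or $F_3$: since $\mathcal{H}$-free implies $\mathcal{F}$-free, Theorems~\ref{thm:F1F2} and \ref{thm:F3} give a clique-cutset, a bisimplicial vertex, or a blowup of the Petersen graph; I then need to upgrade ``bisimplicial vertex'' to ``vertex of degree at most $2$'' and ``blowup of Petersen'' to ``Petersen graph'' using diamond-freeness. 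For the first upgrade, observe that a bisimplicial vertex $u$ has $N(u)=K_1\cup K_2$ for two cliques; in a diamond-free graph a clique containing two distinct vertices adjacent to a common outside vertex $u$ cannot have size $\geq 2$ without creating a diamond (two vertices of the clique plus $u$ plus any third clique vertex), so actually each clique in the neighborhood has at most one vertex — wait, more carefully: a diamond-free graph has the property that every edge lies in a unique maximal clique and neighborhoods of a vertex induce a disjoint union of cliques; a bisimplicial vertex then has degree at most... I would instead argue directly that in the blowup-of-$C_6$ setup, diamond-freeness forces $|A_i|=1$ for all $i$, and similarly forces all the sets $Z_i$ (in the Petersen case) to be singletons, so the ``blowup of the Petersen graph'' is the Petersen graph itself, and the bisimplicial vertices identified in the earlier proofs (which were some $v_i$ with $N(v_i)=A_{i-1}\cup A_{i+1}$, a union of two cliques each of size $1$) have degree exactly $2$.

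So the heart of the argument is: run the proof of Theorem~\ref{thm:C6G} verbatim (it only uses $\mathcal{F}$-freeness and the no-clique-cutset assumption), obtaining that either some $v_i$ is bisimplicial with $N(v_i) = A_{i-1}\cup A_{i+1}$, or $G$ is a blowup of the Petersen graph. In the first case I claim $|A_{i-1}|=|A_{i+1}|=1$: if say $A_{i-1}$ had two vertices $a,a'$, then $a,a'$ together with $v_i$ and $v_{i-2}\in A_{i-2}$ (which is complete to $A_{i-1}$) would induce a diamond (since $a,a'$ adjacent, both adjacent to $v_i$ and to $v_{i-2}$, while $v_i v_{i-2}\notin E$ as $C_6$ is chordless) — wait, need $v_i$ nonadjacent to $v_{i-2}$, which holds. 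Hence $v_i$ has degree $2$. In the Petersen-blowup case, the same kind of argument applied to every bag (each bag $A_i$ of the $C_6$, and each $Z_j$, and $R^*$) shows every bag is a singleton: two vertices in a common bag are adjacent and have a common neighbor in each adjacent bag, and since the Petersen graph has girth $5$ every vertex has two neighbors with a common further neighbor... actually simplest: in the Petersen graph every edge $uv$ has a vertex $w$ adjacent to exactly one of $u,v$ — no; rather, in a blowup of the Petersen graph, if a bag $B_v$ has $\geq 2$ vertices $b,b'$, pick any two non-adjacent bags... hmm, I need a vertex adjacent to both $b,b'$: any vertex in a bag $B_w$ with $vw\in E(\text{Petersen})$ works, and since $v$ has degree $3$ in the Petersen graph and the Petersen graph is not a star, we can find $w_1,w_2$ neighbors of $v$ with $w_1w_2\notin E$, giving $b,b',b_{w_1},b_{w_2}$ inducing a diamond (need $b_{w_1}b_{w_2}\notin E$, which holds since $w_1w_2\notin E(\text{Petersen})$ — the Petersen graph is triangle-free, so any two neighbors $w_1,w_2$ of $v$ are non-adjacent). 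Hence every bag is a singleton and $G$ is exactly the Petersen graph.

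There is one gap to watch: the earlier proofs sometimes conclude ``$G$ has a bisimplicial vertex'' where that vertex is not literally a $v_i$ of the $C_6$ with neighborhood $A_{i-1}\cup A_{i+1}$ — e.g. in Theorem~\ref{thm:F1F2} the bisimplicial vertex is $v_6$ with $N(v_6)=A_1\cup(A_5\cup(A_6\setminus\{v_6\}))$, still a union of two cliques each contained in a blown-up $C_6$ bag; I would check in each invoked case that the two cliques forming the neighborhood lie inside blowup bags (or $Z_j$, $R^*$), so that diamond-freeness forces them to be singletons and the degree to be at most $2$. The main obstacle is precisely this bookkeeping: tracking, across the cases of Theorems~\ref{thm:F1F2}, \ref{thm:F3}, and \ref{thm:C6G}, that every ``bisimplicial'' conclusion actually produces a neighborhood that is a union of two cliques each sitting inside a single bag of some blowup structure, so that the diamond-free hypothesis collapses it; once that is in hand the remaining arguments are the short diamond-forcing computations sketched above, plus the observation that the Petersen graph itself has $\omega=2$ and every vertex of degree $3$, which is why we only claim ``degree at most $2$'' in the non-Petersen cases.
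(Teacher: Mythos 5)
Your proposal is correct and is essentially the paper's own argument: the paper's proof of this theorem is just the remark that it ``follows by applying the proof techniques of Theorems~\ref{Thm:C6Prop}, \ref{thm:F1F2} and \ref{thm:F3}'', which is precisely your plan of rerunning those proofs (noting that diamond-free implies gem-free, so $\mathcal{H}$-free implies $\mathcal{F}$-free) and using diamond-freeness to collapse every bag ($A_i$, $Z_j$, $R^*$) to a singleton, so that the bisimplicial vertices produced there become vertices of degree at most $2$ and the blowup of the Petersen graph becomes the Petersen graph itself. The bookkeeping you flag (checking that each bisimplicial conclusion has its neighborhood inside a union of bags, all of which are non-empty in those proofs) is exactly the verification the paper leaves implicit, and your diamond-forcing computations carry it through.
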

\begin{proof}
By applying the proof techniques of Theorems \ref{Thm:C6Prop}, \ref{thm:F1F2} and \ref{thm:F3}, this theorem follows.
\end{proof}

\begin{theorem}\label{thm:struc-diamond-C5noC6}
Let $G$ be a connected $\cal{H}$-free graph that contains an induced $C_5$. Suppose that $G$ is $C_6$-free. Then either  $G$ has a clique-cutset or $G$ has a  vertex of degree at most $\omega(G)$.
\end{theorem}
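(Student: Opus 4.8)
The plan is to mimic the analysis carried out for the $C_6$-containing case, but now working with a fixed induced $C_5$, say $v_1$-$v_2$-$v_3$-$v_4$-$v_5$-$v_1$, and exploiting the extra leverage that $G$ is $C_6$-free and diamond-free. First I would take $A$ to be the vertex set of a max-blowup of this $C_5$ with $v_i\in A_i$, and partition $V(G)\setminus A$ according to the trace of the neighbourhood on the five cliques $A_1,\dots,A_5$: a set $R$ of vertices with no neighbour in $A$, sets $Y_i$ of vertices seeing only $A_i$, and sets $X_i$ of vertices seeing exactly $A_i\cup A_{i+1}$ (indices mod $5$). The diamond-freeness immediately forces each $A_i$ to be a clique of size at most $\omega(G)-1$ wherever it meets two consecutive cliques, and—crucially, since $G$ is $C_6$-free—there can be no vertex whose neighbourhood meets three or more of the $A_i$'s in a way that would create a long hole; a short case analysis (as in Theorem~\ref{Thm:C6Prop}(a), but one step shorter because $C_6$ is now forbidden outright rather than merely $C_7$) shows $N(A)=X\cup Y$ and $V(G)=A\cup X\cup Y\cup R$, with no ``$Z$-type'' vertices at all.

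Next I would establish the attachment rules exactly as in Theorem~\ref{Thm:C6Prop}(b)--(d): each $X_i$ is complete to $A_i\cup A_{i+1}$ (else a diamond or $C_4$ appears), the sets $[X_i,X_j]$ and $[X_i,Y_j]$ and $[Y_i,Y_j]$ for ``far apart'' indices are empty (else a $P_7$, $C_7$, or $C_6$ is forced), and one gets the emptiness dichotomies saying that few of these sets can be simultaneously non-empty around the cycle. Diamond-freeness should additionally kill most of the ``near'' interactions that were allowed in the gem case, so that the neighbourhood of each $X_i$ is forced to be contained in the clique $A_i\cup A_{i+1}$ together with possibly a clique of private attachments; combined with the no-clique-cutset assumption this should drive every $X_i$ empty by the same connected-component argument used for $(\ref{F3-X-emp})$ and $(\ref{emps})$. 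The same argument applied to $Y_i$—whose neighbourhood, once $X$ is empty, is contained in $A_i$ plus possibly $R$, and where diamond-freeness restricts the $R$-side—should force each $Y_i$ empty or reduce to the situation where $R$ itself is trivial. At that point $R$ is anticomplete to $A$ and attaches only via a clique, so $R$ is empty, and $N(A_i)=A_{i-1}\cup A_{i+1}$ for every $i$.

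The conclusion then splits: either some $A_i$ is a single vertex $v_i$, in which case $\deg(v_i)=|A_{i-1}|+|A_{i+1}|$ and I must argue this is at most $\omega(G)$—here one uses that $A_{i-1}\cup A_i$ and $A_i\cup A_{i+1}$ are cliques of size $\le\omega(G)$ while $A_i=\{v_i\}$, giving $|A_{i-1}|\le\omega(G)-1$ and similarly for $A_{i+1}$, but to reach $\le\omega(G)$ rather than $\le 2\omega(G)-2$ one needs diamond-freeness to show $A_{i-1}$ and $A_{i+1}$ cannot both be large, most likely because a vertex of $A_{i-1}$ and a vertex of $A_{i+1}$ together with $v_i$ and a common neighbour would yield a diamond or because the blowup of $C_5$ with two long consecutive-but-one cliques already contains a diamond. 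If instead every $A_i$ has size at least $2$, then $G$ is a nontrivial blowup of $C_5$, and diamond-freeness forces each $A_i$ to have size exactly... well, one checks that a blowup of $C_5$ in which some $A_i$ has two vertices while a neighbour has two vertices contains a diamond, so all $|A_i|$ are forced down, leaving $G=C_5$ itself (which has a vertex of degree $2\le\omega(G)=2$), or else a clique cutset appears—contradiction. The main obstacle I anticipate is precisely this degree bookkeeping in the blowup: proving the sharp bound $\omega(G)$ (not $2\omega(G)-2$) is where diamond-freeness must be used most carefully, via the observation that in a $C_5$-blowup the clique number is $\max_i(|A_i|+|A_{i+1}|)$ and that two consecutive $A_i$'s of size $\ge 2$ already produce a diamond, which pins down the structure tightly enough to finish.
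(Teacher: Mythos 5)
There is a genuine gap, and it sits exactly where your plan puts all its weight: the claim that the no-clique-cutset hypothesis lets you ``drive every $X_i$ empty'', then every $Y_i$, then $R$, so that $G$ collapses to a blowup of $C_5$. This is false, because $X_i$ is \emph{not} anticomplete to everything outside $A_i\cup A_{i+1}$: a vertex of $X_i$ may have neighbors in $Y_{i+3}$ and in $R$, so the neighborhood of a component of $G[X_i]$ (or of $X_i$ together with its attachments) need not be a clique, and the component argument that worked for (\ref{F3-X-emp}) and (\ref{emps}) does not apply. A concrete counterexample to your intended structural conclusion: take $C_5=a_1$-$a_2$-$a_3$-$a_4$-$a_5$-$a_1$, add $x$ adjacent to $a_1,a_2$ and $y$ adjacent to $a_4$, and the edge $xy$. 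This graph is connected, ($P_7,C_7,C_4$,\,diamond)-free and $C_6$-free, has no clique cutset, yet $X_1=\{x\}\neq\emptyset$, $Y_4=\{y\}\neq\emptyset$ and $[X_1,Y_4]\neq\emptyset$; it is nowhere near a blowup of $C_5$. The theorem still holds for it (e.g.\ $\deg(a_5)=2\le\omega=3$), which points to what the correct argument must do: the paper does not empty out $B$ and $R$, but instead splits on whether $[X_i,B\setminus X_i]$ is empty for all $i$, and in each case uses $P_7/C_7$-freeness plus the absence of a clique cutset to kill only a \emph{local} piece (e.g.\ $X_5\cup Y_5$, or $X_2\cup Y_3$), concluding that some cycle vertex $a_j$ has $N(a_j)=\{a_{j-1},a_{j+1}\}\cup X_{j-1}$ where $\{a_{j-1}\}\cup X_{j-1}$ lies in a clique of size at most $\omega(G)-1$, whence $\deg(a_j)\le\omega(G)$. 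That is how the sharp bound $\omega(G)$ (rather than $2\omega(G)-2$) really arises; it is a local clique count around one cycle vertex, not a global classification of $G$.

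Two smaller points. First, your max-blowup scaffolding is vacuous here: if some $|A_i|\ge 2$, then two vertices of $A_i$ together with $v_{i-1}$ and $v_{i+1}$ induce a diamond, so in a diamond-free graph every $A_i$ is a single vertex and you are simply working with the $C_5$ itself (your closing remark that two \emph{consecutive} big $A_i$'s are needed for a diamond is also off for the same reason). Second, the endgame ``$G=C_5$ or a clique cutset appears'' is not the correct dichotomy: the paper's Case 1 explicitly handles graphs with nonempty $X_i$ and no clique cutset, and only in the subcase $[X_i,B\setminus X_i]=\emptyset$ for all $i$ and $R=\emptyset$ does one conclude $G=C_5$. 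So the overall architecture needs to be rebuilt around locating a low-degree cycle vertex rather than proving a structure theorem for $G$.
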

\begin{proof}
 We denote the $C_5$ contained in $G$ by $C$:=$a_1$-$a_2$-$a_3$-$a_4$-$a_5$-$a_1$. We may assume that $G$ has no clique cutset. Throughout the proof,  we assume  all subscripts are mod $5$.  For each $i$, $i\in \{1,2,\ldots,5\}$, let $X_i =\{x\in V \mid N_C(x) = \{a_i, a_{i+1}\}\}$, $Y_i=\{x\in V\mid N_C(x) = \{a_i\} \}$, $B=N(C)$, $R = V\setminus (B\cup C)$, $X:=\cup_{i=1}^{5}X_i$ and $Y:= \cup_{i=1}^{5}Y_i$. Then for each $i$,  the following hold:
\begin{enumerate}[(1)]\itemsep=1pt
\item\label{prop-diamond-1} $B = X\cup Y$; otherwise, $\{a_i,a_{i+2}\}\subseteq N_C(x)$ for some $x\in B$. Then $\{a_{i},a_{i+1},a_{i+2},x\}$ induces a $C_4$ or a diamond.

\item \label{prop-diamond-2} $G[X_i\cup\{a_i,a_{i+1}\}]$ is complete; if $u,v\in X_i$ are not adjacent, then $\{u,a_i,v,a_{i+1}\}$ induces a diamond.

\item \label{prop-diamond-3} $[X_i,(B\setminus (X_i\cup Y_{i+3})]$ and $[Y_i, Y\setminus Y_i]$ are empty.

Proof of (\ref{prop-diamond-3}): We prove for $i=1$. Let $xy$ be an edge in one of the sets. If $x\in X_1\cup Y_1$ and $y\in X_{2}\cup Y_{2}$, then $\{x,a_1,y,a_2\}$ induces a $C_4$ or a diamond. If $x\in X_1$ and $y\in X_{3}\cup Y_{3}$, then $x$-$a_2$-$a_3$-$y$-$x$ is a $C_4$. If $x\in Y_1$ and $y\in Y_3$, then $y$-$a_3$-$a_4$-$a_5$-$a_1$-$x$-$y$ is a $C_6$. The other cases of (\ref{prop-diamond-3}) follow by the symmetry of $C_5$. This proves (\ref{prop-diamond-3}). $\lozenge$
\end{enumerate}
Next to prove the theorem we consider two cases.
\par
Suppose that $[X_i, B\setminus X_i]\neq \emptyset$, for some $i$, say $i=1$.  Let $x\in X_1$ and $y\in B\setminus X_1$ be adjacent.  Then by (\ref{prop-diamond-3}), $y\in Y_4$. Moreover, $[X_5\cup Y_5,  (B\setminus (X_5\cup Y_5))\cup R]\subseteq [X_5\cup Y_5,X_2\cup Y_3\cup R]$ by (\ref{prop-diamond-3}). If $u\in X_5\cup Y_5$ is adjacent to $v\in X_2\cup Y_3\cup R$, then either $\{a_2,x,y,a_4,a_5,u,v\}$  or $\{x,a_2,a_3,a_4,a_5,u,v\}$ induces a $P_7$ or a $C_7$. So $[X_5\cup Y_5,X_2\cup Y_3\cup R]=\emptyset$, and hence $N(X_5\cup Y_5)= \{a_5,a_1\}$, which is a clique. Since $G$ has no clique cutset, we conclude that  $X_5\cup Y_5$ is empty.
So $N(a_5) =\{a_1,a_4\}\cup X_4$. Since  $\{a_4\}\cup X_4$ is a clique (by (\ref{prop-diamond-2})), we see that $\text{deg}(a_5)\leq \omega(G)$.
\par
Suppose that $[X_i, B\setminus X_i] = \emptyset$, for all $i$.  Now if $R =\emptyset$, then we prove that $G$ is a $C_5$. Indeed, if $x\in X_i\cup Y_i$, for some $i$, and if $Q$ is a component in $G[X_i\cup Y_i]$ containing $x$, then by (\ref{prop-diamond-3}), $N(Q) \subseteq \{a_i,a_{i+1}\}$, which is a clique. Since $G$ has no clique cutset, we conclude that $X_i\cup Y_i=\emptyset$, for all $i$, and hence $G$ is a $C_5$. So $\text{deg}(a_1)=2= \omega(G)$.
 Next assume that $R \neq \emptyset$. Then there exists a vertex $r\in R$ which is adjacent to some $x\in B$. We may assume that $x\in X_1\cup Y_1$. If $y\in X_2\cup Y_3$, then, by (\ref{prop-diamond-3}),  $\{y,a_3,a_4,a_5,a_1,x, r\}$ induces a $P_7$ or a $C_7$. So $X_2\cup Y_3 =\emptyset$. This implies that $N(a_3) =\{a_2,a_4\}\cup X_3$. Since  $\{a_4\}\cup X_3$ is a clique (by (\ref{prop-diamond-2})), we see that $\text{deg}(a_3)\leq \omega(G)$.  This completes the proof.
\end{proof}

\begin{theorem}\label{cor:struc-diamond}
Let $G$ be a connected $\cal{H}$-free graph. Then $G$ is the Petersen graph or $G$ has a clique-cutset or $G$ has a  vertex of degree at most $\omega(G)$.
\end{theorem}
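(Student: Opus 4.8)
The plan is to derive Theorem~\ref{cor:struc-diamond} from the structure results already in hand by splitting on which even hole $G$ contains. The three exhaustive cases are: (a) $G$ has an induced $C_6$; (b) $G$ is $C_6$-free but has an induced $C_5$; (c) $G$ is both $C_5$-free and $C_6$-free.

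In case (a), Theorem~\ref{cor:diamond} applies directly and says $G$ has a clique-cutset, a vertex of degree at most $2$, or is the Petersen graph. Since a connected graph containing an induced $C_6$ has an edge, $\omega(G)\ge 2$, so a vertex of degree at most $2$ is in particular a vertex of degree at most $\omega(G)$, and the conclusion follows. In case (b), Theorem~\ref{thm:struc-diamond-C5noC6} applies directly and already gives that $G$ has a clique-cutset or a vertex of degree at most $\omega(G)$.

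Case (c) needs a short extra argument. Here $G$ is $(C_4,C_5,C_6,C_7)$-free, and any hole of length at least $8$ contains an induced $P_7$ (seven consecutive vertices of the hole induce a $P_7$), so the hypotheses leave $G$ with no hole at all, i.e.\ $G$ is chordal. A chordal graph has a simplicial vertex $u$, and then $\{u\}\cup N(u)$ is a clique, so $\deg(u)\le\omega(G)-1\le\omega(G)$, giving the conclusion again. (To stay entirely inside the results quoted above one may instead observe that $G$ is even-hole-free, take a bisimplicial vertex $u$ via Theorem~\ref{EHfree-bisimp} with $N(u)=K_1\cup K_2$, and argue using that $G$ is diamond-free: if $N(u)$ is not a clique then $K_1\cap K_2=\emptyset$ and $K_1$ is anticomplete to $K_2$ — a common neighbour, respectively a cross edge, would complete a diamond with $u$ — and then a shortest path between $K_1$ and $K_2$ in $G-u$ would close up with $u$ into a hole, which is impossible, so $\{u\}$ is a clique-cutset; while if $N(u)$ is a clique then $\deg(u)\le\omega(G)-1$.)

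The cases are exhaustive, so this completes the proof. I do not expect a real obstacle: essentially all of the work sits in Theorems~\ref{cor:diamond} and~\ref{thm:struc-diamond-C5noC6}, and the only point needing a moment of care is case (c) — noticing that simultaneously forbidding $C_4,C_5,C_6,C_7$ and $P_7$ kills every hole, after which a simplicial (or bisimplicial) vertex delivers the Vizing-type degree bound.
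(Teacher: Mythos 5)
Your proposal is correct and follows essentially the same route as the paper: dispatch the $C_6$ case by Theorem~\ref{cor:diamond}, the ($C_5$, no $C_6$) case by Theorem~\ref{thm:struc-diamond-C5noC6}, and observe that in the remaining case forbidding $C_4,C_5,C_6,C_7$ together with $P_7$ (which excludes holes of length at least $8$) makes $G$ chordal, so a simplicial vertex gives the degree bound. Your extra remarks (the $\omega(G)\ge 2$ check in the $C_6$ case and the alternative bisimplicial argument) are fine but not needed beyond what the paper does.
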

\begin{proof} By Theorem~\ref{cor:diamond} and \ref{thm:struc-diamond-C5noC6}, we may assume that $G$ is ($C_5$,\,$C_6$)-free. Since $G$ is $P_7$-free, $G$ has no induced cycle of length at least $8$. Then since  $G$ is ($C_4$,\,$C_7$)-free, we conclude that $G$ is chordal, and so $G$ has a simplicial vertex of degree at most $\omega(G)$.
\end{proof}

\begin{theorem}\label{cor:col-diamond}
Every $\cal{H}$-free graph $G$ satisfies   $\chi(G)\le \omega(G)+1$.
\end{theorem}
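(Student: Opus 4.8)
The plan is to prove $\chi(G)\le\omega(G)+1$ by induction on $|V(G)|$, mirroring the structure of the proof of Theorem~\ref{2w-1chrbound} but exploiting the much stronger structural dichotomy provided by Theorem~\ref{cor:struc-diamond}. First I would reduce to the connected case, since the chromatic number of a disconnected graph is the maximum over its components. With $G$ connected, Theorem~\ref{cor:struc-diamond} gives three possibilities: either $G$ is the Petersen graph, or $G$ has a clique-cutset, or $G$ has a vertex of degree at most $\omega(G)$.

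In the clique-cutset case, I would argue exactly as in Theorem~\ref{2w-1chrbound}: write $V(G)\setminus K = A\uplus B$ with $[A,B]=\emptyset$ and $A,B$ nonempty, note $\chi(G)=\max\{\chi(G[K\cup A]),\chi(G[K\cup B])\}$, observe that $G[K\cup A]$ and $G[K\cup B]$ are again $\mathcal{H}$-free with clique number at most $\omega(G)$, and apply the induction hypothesis to each. In the low-degree case, let $u$ be a vertex with $\deg(u)\le\omega(G)$; then $G\setminus u$ is $\mathcal{H}$-free with $\omega(G\setminus u)\le\omega(G)$, so by induction $\chi(G\setminus u)\le\omega(G\setminus u)+1\le\omega(G)+1$, and since $u$ has at most $\omega(G)$ neighbors we can extend any $(\omega(G)+1)$-coloring of $G\setminus u$ to $G$ by assigning $u$ a color absent from $N(u)$.

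The only genuinely special case is when $G$ is the Petersen graph, which is not covered by either recursive step. Here $\omega(G)=2$ (the Petersen graph is triangle-free), so the claimed bound is $\chi(G)\le 3$, and indeed the Petersen graph is well known to be $3$-chromatic (it is not bipartite as it contains a $C_5$, and an explicit proper $3$-coloring exists). So in this base case the statement holds directly.

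I do not expect any real obstacle: every case is either a verbatim adaptation of the argument in Theorem~\ref{2w-1chrbound} or the trivial observation about the Petersen graph. The only point requiring a word of care is that the structural results (Theorems~\ref{cor:diamond}, \ref{thm:struc-diamond-C5noC6}, \ref{cor:struc-diamond}) are stated for connected graphs, so the induction must always pass to connected components before invoking them; but since $\mathcal{H}$-freeness and the bound $\omega\le\omega(G)$ are inherited by induced subgraphs, this causes no difficulty.
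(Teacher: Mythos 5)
Your proposal is correct and follows essentially the same route as the paper: induction on $|V(G)|$ using Theorem~\ref{cor:struc-diamond}, handling the clique-cutset and low-degree cases exactly as in Theorem~\ref{2w-1chrbound}, and verifying the Petersen graph directly (where the paper simply notes the bound holds ``obviously,'' your observation that $\omega=2$ and $\chi=3$ makes this explicit). No gaps.
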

\begin{proof}   Let $G$ be any $\mathcal{H}$-free graph. We prove the theorem by induction on $|V(G)|$.  We may assume that $G$ is connected, and we apply Corollary~\ref{cor:struc-diamond}.

If $G$ is the Petersen graph, then the theorem holds obviously, and if $G$ has a clique cutset, then the desired result follows as in Theorem~\ref{2w-1chrbound}.

If $G$ has a vertex $u$ of degree at most $\omega(G)$, then by induction hypothesis, we have
$\chi(G\sm u)\le \omega(G\sm u)+1 \leq \omega(G)+1$.  So we can
take any $\chi(G\sm u)$-coloring of $G\sm u$ and extend it to a
($\omega(G)+1$)-coloring of $G$, using for $u$ a
 color that does not appear in its neighborhood.

 This completes the proof.
\end{proof}


\section*{Acknowledgements}
The first  and the third authors thank CHRIST (Deemed to be University) for providing all the facilities to do this research work. The second author's research is partially supported by DST-SERB, Government of India under MATRICS scheme.

{\small

}
\end{document}